\newtheorem{theorem}{Theorem}[section]
\newtheorem{lemma}{Lemma}[section]
\newtheorem{corollary}{Corollary}[section]
\theoremstyle{definition}
\newtheorem{definition}{Definition}[section]
\newtheorem{remark}{Remark}[section]
\newtheorem{example}{Example}[section]
\numberwithin{equation}{section}
\numberwithin{figure}{section}
\numberwithin{table}{section}
\renewcommand{\epsilon}{\varepsilon}
\newcommand{\xx}{{\boldsymbol x}}
\newcommand{\uu}{{\boldsymbol u}}
\newcommand{\vv}{{\boldsymbol v}}
\begin{document}

\title{Spectral properties of flipped Toeplitz matrices}

\author{Giovanni Barbarino\\
\footnotesize Mathematics and Operational Research Unit, University of Mons, Belgium (giovanni.barbarino@umons.ac.be)\\[10pt]
Sven-Erik Ekstr\"om\\
\footnotesize Division of Scientific Computing, Department of Information Technology, Uppsala University, Sweden (sven-erik.ekstrom@uu.se)\\[10pt]
Carlo Garoni\\
\footnotesize Department of Mathematics, University of Rome Tor Vergata, Italy (garoni@mat.uniroma2.it)\\[10pt]
David Meadon\\
\footnotesize Division of Scientific Computing, Department of Information Technology, Uppsala University, Sweden (david.meadon@uu.se)\\[10pt]
Stefano Serra-Capizzano\\
\footnotesize Department of Science and High Technology, University of Insubria, Italy (s.serracapizzano@uninsubria.it)\\
\footnotesize Division of Scientific Computing, Department of Information Technology, Uppsala University, Sweden (stefano.serra@it.uu.se)\\[10pt]
Paris Vassalos\\
\footnotesize Department of Informatics, Athens University of Economics and Business, Greece (pvassal@aueb.gr)}
\date{}

\maketitle

\begin{abstract}
We study the spectral properties of flipped Toeplitz matrices of the form $H_n(f)=Y_nT_n(f)$, where $T_n(f)$ is the $n\times n$ Toeplitz matrix generated by the function $f$ and $Y_n$ is the $n\times n$ exchange (or flip) matrix having $1$ on the main anti-diagonal and $0$ elsewhere. In particular, under suitable assumptions on $f$, we establish an alternating sign relationship between the eigenvalues of $H_n(f)$, the eigenvalues of $T_n(f)$, and the quasi-uniform samples of $f$.
Moreover, after fine-tuning a few known theorems on Toeplitz matrices, we use them to provide localization results for the eigenvalues of $H_n(f)$. Our study is motivated by the convergence analysis of the minimal residual (MINRES) method for the solution of real non-symmetric Toeplitz linear systems of the form $T_n(f)\mathbf x=\mathbf b$ after pre-multiplication of both sides by $Y_n$, as suggested by Pestana and Wathen.

\smallskip

\noindent{\em Keywords:} Toeplitz and flipped Toeplitz matrices, spectral distribution, localization of eigenvalues, MINRES

\smallskip

\noindent{\em 2010 MSC:} 15B05, 15A18, 65F10
\end{abstract}

\section{Introduction}
A matrix of the form
\begin{equation}\label{mbtm_expr}
\left[f_{i-j}\right]_{i,j=1}^{n}=\begin{bmatrix}
f_0 & f_{-1} & \ \cdots & \ \ \cdots & f_{-(n-1)} \\
f_1 & \ddots & \ \ddots & \ \ & \vdots\\
\vdots & \ddots & \ \ddots & \ \ddots & \vdots\\
\vdots & & \ \ddots & \ \ddots & f_{-1}\\[3pt]
f_{n-1} & \cdots & \ \cdots & \ \ f_1 & f_0
\end{bmatrix},
\end{equation}
whose entries are constant along each diagonal, is called a Toeplitz matrix. 
In the case where the entries $f_k$ are the Fourier coefficients of a function $f:[-\pi,\pi]\to\mathbb C$ in $L^1([-\pi,\pi])$, i.e.,
\[ f_k=\frac1{2\pi}\int_{-\pi}^\pi f(x){\rm e}^{-{\rm i}kx}{\rm d}x,\qquad k\in\mathbb Z, \]
the matrix \eqref{mbtm_expr} is denoted by $T_n(f)$ and is referred to as the $n$th Toeplitz matrix generated by $f$.

The efficient solution of a linear system with a coefficient matrix of the form $T_n(f)$ by means of Krylov subspace methods is a research topic that involved several researchers over time. The main efforts focused on the case where $T_n(f)$ is real symmetric positive definite, so that the conjugate gradient (CG) method can be applied as well as its preconditioned version. Whenever $T_n(f)$ is real symmetric but indefinite, an alternative to (preconditioned) CG is the (preconditioned) minimal residual (MINRES) method. A common feature of CG and MINRES is that their convergence bounds rely only on the eigenvalues of the system matrix, or on the eigenvalues of the preconditioned system matrix if preconditioning 
is applied; see \cite[Section~2.2]{Fabio-book} and keep in mind that MINRES is (mathematically) equivalent to the generalized minimal residual (GMRES) method for real symmetric matrices.
In the case where the Fourier coefficients of $f$ are real but $T_n(f)$ is not symmetric, Pestana and Wathen \cite{PW} suggested pre-multiplying $T_n(f)$ by the $n\times n$ exchange (or flip) matrix
\begin{equation}\label{flipM}
Y_n=\begin{bmatrix*}
&&&&1\\
&&&1\\
&&\iddots\\
&1\\
1
\end{bmatrix*}.
\end{equation}
In this way, the resulting flipped matrix $H_n(f)=Y_nT_n(f)$ is real symmetric and (preconditioned) MINRES can be applied. 

A main reason behind the interest in the spectral properties of flipped Toeplitz matrices such as $H_n(f)$ is precisely the convergence analysis of MINRES. 
In this regard, a precise asymptotic spectral distribution theorem for the sequence of flipped Toeplitz matrices $\{H_n(f)\}_n$ was established independently by Ferrari et al.~\cite{SIMAX-hankel} through techniques based on the notion of approximating classes of sequences \cite[Chapter~5]{GLTbookI}, and by Mazza and Pestana~\cite{BIT-hankel} through the theory of block generalized locally Toeplitz sequences \cite{GLTbookIII}. The same kind of study was later extended to flipped multilevel Toeplitz matrices in \cite{ELA-multihankel,SIMAX-multihankel}. However, no 
localization result for the eigenvalues of $H_n(f)$ was provided so far in the literature, despite the importance of spectral localization in the convergence analysis of MINRES. 

In this paper, based on classical results for Toeplitz matrices \cite{BoSi,SC-matrix,GLTbookI} and on recent results on the asymptotic spectral distribution of arbitrary sequences of matrices \cite{sdau}, we delve deeper into the spectral properties of $H_n(f)$ under suitable assumptions on the function $f$. In particular:
\begin{itemize}[nolistsep,leftmargin=*]
	\item We show that $\lceil n/2\rceil$ eigenvalues of $H_n(f)$ coincide with $\lceil n/2\rceil$ eigenvalues of $T_n(f)$ and have an asymptotic distribution described by $f$, while the other $\lfloor n/2\rfloor$ eigenvalues of $H_n(f)$ coincide with $\lfloor n/2\rfloor$ eigenvalues of $-T_n(f)$ and have an asymptotic distribution described by $-f$.
	\item As an extension of the previous result, we show that, for every $n$, the eigenvalues of $H_n(f)$ are given by the following alternating sign relationship:
	\[ \lambda_i(H_n(f))=(-1)^{i+1}\lambda_i(T_n(f))=(-1)^{i+1}f(x_{i,n})+\epsilon_{i,n},\qquad i=1,\ldots,n, \]
	where $\lambda_i(T_n(f))$, $i=1,\ldots,n$, are the eigenvalues of $T_n(f)$, $\{x_{i,n}\}_{i=1,\ldots,n}$ is an asymptotically uniform grid (see Section~\ref{a.u.}), and $\max_{i=1,\ldots,n}|\epsilon_{i,n}|\to0$ as $n\to\infty$; moreover, $\epsilon_{i,n}=0$ for all $i=1,\ldots,n$ if $f$ has a finite number of local maximum/minimum points and discontinuities.
	\item After fine-tuning a few known theorems on Toeplitz matrices, we use them to provide localization results for the eigenvalues of $H_n(f)$. 
\end{itemize}
The paper is organized as follows. Section~\ref{prel} contains some preliminaries. Section~\ref{main} contains statements and proofs of our main spectral results for flipped Toeplitz matrices of the form $H_n(f)$. Section~\ref{num_exps} contains numerical experiments that illustrate some of the main results. Section~\ref{conc} contains final remarks.

\section{Preliminaries}\label{prel}

\subsection{Notation and Terminology}\label{a.u.}
We denote by $\mu_d$ the Lebesgue measure in $\mathbb R^d$. Throughout this paper, all the terminology from measure theory (such as ``measurable'', ``a.e.'', etc.)\ always refers to the Lebesgue measure. The closure of a set $E$ is denoted by $\overline E$.
We use a notation borrowed from probability theory to indicate sets. For example, if $f,g:\Omega\subseteq\mathbb R^d\to\mathbb R$, then $\{f\le1\}=\{\xx\in\Omega:f(\xx)\le1\}$, $\mu_d\{f>0,\:g<0\}$ is the measure of the set $\{\xx\in\Omega:f(\xx)>0,\:g(\xx)<0\}$, etc.

Given a measurable function $f:\Omega\subseteq\mathbb R^d\to\mathbb C$, the essential range of $f$ is denoted by $\mathcal{ER}(f)$. We recall that $\mathcal{ER}(f)$ is defined as
\[ \mathcal{ER}(f)=\{z\in\mathbb C:\hspace{0.5pt}\mu_d\{|f-z|<\epsilon\}>0\hspace{0.5pt}\mbox{ for all }\hspace{0.5pt}\epsilon>0\}. \]
It is clear that $\mathcal{ER}(f)\subseteq\overline{f(\Omega)}$.
Moreover, $\mathcal{ER}(f)$ is closed and $f\in\mathcal{ER}(f)$ a.e.; see, e.g., \cite[Lemma~2.1]{GLTbookI}.
If $f$ is real a.e.\ then $\mathcal{ER}(f)$ is a subset of $\mathbb R$. In this case, we define the essential infimum (resp., supremum) of $f$ on $\Omega$ as the infimum (resp., supremum) of $\mathcal{ER}(f)$:
\[ \mathop{\rm ess\,inf}_\Omega f=\inf\mathcal{ER}(f),\qquad\mathop{\rm ess\,sup}_\Omega f=\sup\mathcal{ER}(f). \]

Throughout this paper, any finite sequence of points in $\mathbb R$ is referred to as a grid.
Consider an interval $[a,b]$ and, for every $n$, let $\mathcal G_n=\{x_{i,n}\}_{i=1,\ldots,d_n}$ be a grid of $d_n$ points in $[a,b]$ 
with $d_n\to\infty$ as $n\to\infty$. The number
\begin{equation*}
m(\mathcal G_n)=\max_{i=1,\ldots,d_n}\left|x_{i,n}-\Bigl(a+i\,\frac{b-a}{d_n+1}\Bigr)\right|
\end{equation*}
measures the distance of $\mathcal G_n$ from the uniform grid $\{a+i(b-a)/(d_n+1)\}_{i=1,\ldots,d_n}$; we refer to it as the uniformity measure of the grid $\mathcal G_n$.
We say that $\mathcal G_n$ is asymptotically uniform (a.u.)\ in $[a,b]$ if
\[ \lim_{n\to\infty}m(\mathcal G_n)=0. \]

\subsection{Asymptotic singular value and eigenvalue distributions of a matrix-sequence}
Throughout this paper, a matrix-sequence is a sequence of the form $\{A_n\}_n$, where $A_n$ is a square matrix and ${\rm size}(A_n)=d_n\to\infty$ as $n\to\infty$.
We denote by $C_c(\mathbb R)$ (resp., $C_c(\mathbb C)$) the space of continuous complex-valued functions with bounded support defined on $\mathbb R$ (resp., $\mathbb C$).
If $A\in\mathbb C^{m\times m}$, the singular values and eigenvalues of $A$ are denoted by $\sigma_1(A),\ldots,\sigma_m(A)$ and $\lambda_1(A),\ldots,\lambda_m(A)$, respectively. The minimum and maximum singular values of $A$ are also denoted by $\sigma_{\min}(A)$ and $\sigma_{\max}(A)$.
A matrix-valued function $f:\Omega\subseteq\mathbb R^d\to\mathbb C^{r\times r}$ is said to be measurable (resp., bounded, continuous, continuous a.e., in $L^p(\Omega)$, etc.)\ if its components $f_{ij}:\Omega\to\mathbb C$, $i,j=1,\ldots,r$, are measurable (resp., bounded, continuous, continuous a.e., in $L^p(\Omega)$, etc.).

\begin{definition}[\textbf{asymptotic singular value and eigenvalue distributions of a matrix-sequence}]\label{dd}
Let $\{A_n\}_n$ be a matrix-sequence with $A_n$ of size $d_n$, and let $f:\Omega\subset\mathbb R^d\to\mathbb C^{k\times k}$ be measurable with $0<\mu_d(\Omega)<\infty$.
\begin{itemize}[nolistsep,leftmargin=*]
	\item We say that $\{A_n\}_n$ has an asymptotic eigenvalue (or spectral) distribution described by $f$ if
	\begin{equation}\label{sd}
	\lim_{n\to\infty}\frac1{d_n}\sum_{i=1}^{d_n}F(\lambda_i(A_n))=\frac1{\mu_d(\Omega)}\int_\Omega\frac{\sum_{i=1}^kF(\lambda_i(f(\xx)))}{k}\mathrm{d}\xx,\qquad\forall\,F\in C_c(\mathbb C).
	\end{equation}
	In this case, $f$ is called the eigenvalue (or spectral) symbol of $\{A_n\}_n$ and we write $\{A_n\}_n\sim_\lambda f$.
	\item We say that $\{A_n\}_n$ has an asymptotic singular value distribution described by $f$ if
	\begin{equation}\label{svd}
	\lim_{n\to\infty}\frac1{d_n}\sum_{i=1}^{d_n}F(\sigma_i(A_n))=\frac1{\mu_d(\Omega)}\int_\Omega\frac{\sum_{i=1}^kF(\sigma_i(f(\xx)))}{k}\mathrm{d}\xx,\qquad\forall\,F\in C_c(\mathbb R).
	\end{equation}
	In this case, $f$ is called the singular value symbol of $\{A_n\}_n$ and we write $\{A_n\}_n\sim_\sigma f$.
\end{itemize}
\end{definition}

We remark that Definition~\ref{dd} is well-posed as the functions $\xx\mapsto\sum_{i=1}^kF(\lambda_i(f(\xx)))$ and $\xx\mapsto\sum_{i=1}^kF(\sigma_i(f(\xx)))$ appearing in \eqref{sd}--\eqref{svd} are measurable \cite[Lemma~2.1]{GLTbookIII}. Throughout this paper, whenever we write a relation such as $\{A_n\}_n\sim_\lambda f$ or $\{A_n\}_n\sim_\sigma f$, it is understood that $\{A_n\}_n$ and $f$ are as in Definition~\ref{dd}, i.e., $\{A_n\}_n$ is a matrix-sequence and $f$ is a measurable function taking values in $\mathbb C^{k\times k}$ for some $k$ and defined on a subset $\Omega$ of some $\mathbb R^d$ with $0<\mu_d(\Omega)<\infty$. 
Since any finite multiset of numbers can always be interpreted as the spectrum of a matrix, a byproduct of Definition~\ref{dd} is the following definition.

\begin{definition}[\textbf{asymptotic distribution of a sequence of finite multisets of numbers}]\label{adn}
Let $\{\Lambda_n=\{\lambda_{1,n},\ldots,\lambda_{d_n,n}\}\}_n$ be a sequence of finite multisets of numbers such that $d_n\to\infty$ as $n\to\infty$, and let $f$ be as in Definition~\ref{dd}. 
We say that $\{\Lambda_n\}_n$ has an asymptotic distribution described by $f$, and we write $\{\Lambda_n\}_n\sim f$, if $\{A_n\}_n\sim_\lambda f$, where $A_n$ is any matrix whose spectrum equals $\Lambda_n$ (e.g., $A_n={\rm diag}(\lambda_{1,n},\ldots,\lambda_{d_n,n})$).
\end{definition}

The next lemma is a slight generalization of \cite[Lemma~3.12]{sdau} and it can be proved in the same way.

\begin{lemma}\label{lem:concat}
Let $\{A_n\}_n$ be a matrix-sequence, let $f:[a,b]\to\mathbb C^{r\times r}$ be measurable, and suppose that $\{A_n\}_n\sim_\lambda f$. 
Let $\lambda_1(f),\ldots,\lambda_r(f):[a,b]\to\mathbb C$ be $r$ measurable functions such that $\lambda_1(f(x)),\ldots,\lambda_r(f(x))$ are the eigenvalues of $f(x)$ for every $x\in[a,b]$. Then, for every $\alpha,\beta\in\mathbb R$ with $\alpha<\beta$, we have $\{A_n\}_n\sim_\lambda\tilde f$, where $\tilde f$ is the following concatenation on the interval $[\alpha,\beta]$ of resized versions of $\lambda_1(f),\ldots,\lambda_r(f):$
\begin{equation*}
\tilde f:[\alpha,\beta]\to\mathbb C,\qquad \tilde f(y)=\left\{\begin{aligned}
&\textstyle{\lambda_1(f(a+\frac{(b-a)r}{\beta-\alpha}(y-\alpha))),} &&\textstyle{\alpha\le y<\alpha+\frac{\beta-\alpha}r,}\\[5pt]
&\textstyle{\lambda_2(f(a+\frac{(b-a)r}{\beta-\alpha}(y-\alpha-\frac{\beta-\alpha}{r}))),} &&\textstyle{\alpha+\frac{\beta-\alpha}r\le y<\alpha+2\frac{\beta-\alpha}r,}\\[5pt]
&\textstyle{\lambda_3(f(a+\frac{(b-a)r}{\beta-\alpha}(y-\alpha-2\frac{\beta-\alpha}{r}))),} &&\textstyle{\alpha+2\frac{\beta-\alpha}r\le y<\alpha+3\frac{\beta-\alpha}r,}\\[5pt]
&\qquad\vdots && \qquad\vdots\\[5pt]
&\textstyle{\lambda_r(f(a+\frac{(b-a)r}{\beta-\alpha}(y-\alpha-(r-1)\frac{\beta-\alpha}{r}))),} &&\textstyle{\alpha+(r-1)\frac{\beta-\alpha}r\le y\le\beta.}\\
\end{aligned}\right.
\end{equation*}
\end{lemma}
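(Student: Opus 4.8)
The plan is to reduce the claim to the defining integral identity of Definition~\ref{dd}. Since $\tilde f$ takes values in $\mathbb C=\mathbb C^{1\times1}$ (so that $k=1$ in Definition~\ref{dd}) and is defined on $[\alpha,\beta]$ with $0<\beta-\alpha<\infty$, the relation $\{A_n\}_n\sim_\lambda\tilde f$ that we must prove reads
\[ \lim_{n\to\infty}\frac1{d_n}\sum_{i=1}^{d_n}F(\lambda_i(A_n))=\frac1{\beta-\alpha}\int_\alpha^\beta F(\tilde f(y))\,\mathrm{d}y,\qquad\forall\,F\in C_c(\mathbb C). \]
On the other hand, the hypothesis $\{A_n\}_n\sim_\lambda f$ states that the left-hand side equals $\frac1{b-a}\int_a^b\frac1r\sum_{j=1}^rF(\lambda_j(f(x)))\,\mathrm{d}x$ for every $F\in C_c(\mathbb C)$ (the eigenvalues of $f(x)$ may be listed in any order, in particular via the given branches $\lambda_j(f)$). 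Hence it suffices to establish the purely measure-theoretic identity
\[ \frac1{\beta-\alpha}\int_\alpha^\beta F(\tilde f(y))\,\mathrm{d}y=\frac1{b-a}\int_a^b\frac{\sum_{j=1}^rF(\lambda_j(f(x)))}{r}\,\mathrm{d}x,\qquad\forall\,F\in C_c(\mathbb C). \]

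First I would record that $\tilde f$ is well defined and measurable. By construction, on each of the half-open subintervals $I_j=\bigl[\alpha+(j-1)\tfrac{\beta-\alpha}r,\,\alpha+j\tfrac{\beta-\alpha}r\bigr)$, $j=1,\ldots,r$ (with the right endpoint $\beta$ appended to $I_r$), we have $\tilde f=\lambda_j(f)\circ\phi_j$, where $\phi_j\colon I_j\to[a,b)$ is the increasing affine bijection $\phi_j(y)=a+\tfrac{(b-a)r}{\beta-\alpha}\bigl(y-\alpha-(j-1)\tfrac{\beta-\alpha}r\bigr)$. Since each $\lambda_j(f)\colon[a,b]\to\mathbb C$ is measurable by hypothesis and each $\phi_j$ is affine, $\tilde f$ is measurable, and clearly $0<\mu_1([\alpha,\beta])<\infty$; thus $\tilde f$ fits the framework of Definition~\ref{dd}.

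Then I would compute the left-hand integral by splitting it over the $I_j$ and performing, on each piece, the change of variable $x=\phi_j(y)$. Since $\mathrm{d}x=\tfrac{(b-a)r}{\beta-\alpha}\,\mathrm{d}y$ and $\phi_j$ maps $I_j$ onto $[a,b)$, this gives $\int_{I_j}F(\tilde f(y))\,\mathrm{d}y=\tfrac{\beta-\alpha}{(b-a)r}\int_a^bF(\lambda_j(f(x)))\,\mathrm{d}x$; summing over $j=1,\ldots,r$ and dividing by $\beta-\alpha$ yields exactly the right-hand side of the displayed identity, which together with $\{A_n\}_n\sim_\lambda f$ completes the proof.

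There is essentially no serious obstacle here: the statement is a bookkeeping exercise built on a piecewise affine substitution, and indeed it can be proved exactly as \cite[Lemma~3.12]{sdau}. The only two points deserving a word of care are (i) the measurability of $\tilde f$, which is precisely why the measurable eigenvalue branches $\lambda_1(f),\ldots,\lambda_r(f)$ are assumed rather than produced inside the proof (a measurable selection of the eigenvalues of a general measurable matrix-valued function need not be available automatically), and (ii) the finitely many endpoints where the ``$<$''/``$\le$'' conventions in the definition of $\tilde f$ overlap or leave a gap: these form a Lebesgue-null set and hence affect none of the integrals above.
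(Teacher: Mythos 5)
Your proof is correct and is exactly the argument the paper intends: the paper gives no proof of Lemma~\ref{lem:concat} at all, stating only that it ``can be proved in the same way'' as \cite[Lemma~3.12]{sdau}, and your reduction to the integral identity via the piecewise affine change of variables $x=\phi_j(y)$ on each subinterval $I_j$ supplies precisely the omitted details. Your two points of care --- measurability of $\tilde f$ (which is why the measurable eigenvalue branches are hypotheses rather than constructed) and the Lebesgue-null set of endpoints --- are the right ones, and nothing is missing.
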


Theorems~\ref{eccoGi-n}--\ref{thm:mmd} are fundamental asymptotic distribution results obtained in \cite{sdau}. They play a central role hereinafter. 
Throughout this paper, we use ``increasing'' as a synonym of ``non-decreasing''.

\begin{theorem}\label{eccoGi-n}
Let $f:[a,b]\to\mathbb R$ be bounded and continuous a.e.\ with $\mathcal{ER}(f)=[\inf_{[a,b]}f,\sup_{[a,b]}f]$. 
Let $\{\Lambda_n=\{\lambda_{1,n},\ldots,\lambda_{d_n,n}\}\}_n$ be a sequence of finite multisets of 
real numbers such that $d_n\to\infty$ as $n\to\infty$. Assume the following. 
\begin{itemize}[nolistsep,leftmargin=*]
	\item $\{\Lambda_n\}_n\sim f$. 
	\item $\Lambda_n\subseteq[\inf_{[a,b]}f-\epsilon_n,\sup_{[a,b]}f+\epsilon_n]$ for every $n$ and for some $\epsilon_n\to0$ as $n\to\infty$.
\end{itemize}
Then, for every a.u.\ grid $\{x_{i,n}\}_{i=1,\ldots,d_n}$ in $[a,b]$, if $\sigma_n$ and $\tau_n$ are two permutations of $\{1,\ldots,d_n\}$ such that the vectors $[f(x_{\sigma_n(1),n}),\ldots,f(x_{\sigma_n(d_n),n})]$ and $[\lambda_{\tau_n(1),n},\ldots,\lambda_{\tau_n(d_n),n}]$ are sorted in increasing order, we have
\[ \max_{i=1,\ldots,d_n}|f(x_{\sigma_n(i),n})-\lambda_{\tau_n(i),n}|\to0\ \,\mbox{as}\ \,n\to\infty. \]
In particular,
\[ \min_\tau\max_{i=1,\ldots,d_n}|f(x_{i,n})-\lambda_{\tau(i),n}|\to0\ \,\mbox{as}\ \,n\to\infty, \]
where the minimum is taken over all permutations $\tau$ of $\{1,\ldots,d_n\}$.
\end{theorem}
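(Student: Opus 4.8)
The plan is to pass through the empirical measures of the two families of numbers and their quantile functions. Write $m=\inf_{[a,b]}f$ and $M=\sup_{[a,b]}f$, so that $m\le f(x)\le M$ for every $x\in[a,b]$; since $\mathcal{ER}(f)\subseteq\overline{f([a,b])}\subseteq[m,M]$, the hypothesis $\mathcal{ER}(f)=[\inf_{[a,b]}f,\sup_{[a,b]}f]$ forces $\mathcal{ER}(f)$ to be exactly the interval $[m,M]$. If $m=M$ then $f\equiv m$ and, since $\Lambda_n\subseteq[m-\epsilon_n,m+\epsilon_n]$, the assertion is immediate; so assume $m<M$. Let $\mu$ be the normalized pushforward of Lebesgue measure under $f$, i.e., $\mu(B)=\frac1{b-a}\mu_1(f^{-1}(B))$ for Borel $B\subseteq\mathbb R$. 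Then $\mathrm{supp}\,\mu=\mathcal{ER}(f)=[m,M]$, so $\mu$ has no gap in its support, and hence its quantile function $Q(t)=\inf\{s\in\mathbb R:\mu((-\infty,s])\ge t\}$, $t\in(0,1)$, has no jump; being nondecreasing and bounded, $Q$ extends to a uniformly continuous map $Q:[0,1]\to[m,M]$ with $Q(0)=m$ and $Q(1)=M$.

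Next I would show that both empirical measures $\mu_n=\frac1{d_n}\sum_{i=1}^{d_n}\delta_{\lambda_{i,n}}$ and $\nu_n=\frac1{d_n}\sum_{i=1}^{d_n}\delta_{f(x_{i,n})}$ converge weakly to $\mu$. For $\mu_n$ this is exactly the assumption $\{\Lambda_n\}_n\sim f$ (which says $\int F\,\mathrm d\mu_n\to\int F\,\mathrm d\mu$ for all $F\in C_c(\mathbb R)$) together with tightness, the latter being immediate because $\Lambda_n\subseteq[m-\epsilon_n,M+\epsilon_n]\subseteq[m-1,M+1]$ for $n$ large. For $\nu_n$ I would invoke the standard fact that, for a bounded a.e.-continuous $g:[a,b]\to\mathbb R$ and an a.u.\ grid $\{x_{i,n}\}$, one has $\frac1{d_n}\sum_{i=1}^{d_n}g(x_{i,n})\to\frac1{b-a}\int_a^b g$ (proved by sandwiching $g$ between two continuous functions with arbitrarily close integrals); applying it with $g=F\circ f$, $F\in C_c(\mathbb R)$, gives $\int F\,\mathrm d\nu_n\to\int F\,\mathrm d\mu$, so $\nu_n\to\mu$ weakly. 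Observe also that $\nu_n$ is supported in $[m,M]$ for every $n$.

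The crux is to upgrade these weak convergences to \emph{uniform} convergence of the quantile functions on the whole of $(0,1)$. Since $Q$ is continuous, every $t\in(0,1)$ is a continuity point of $Q$, so weak convergence gives $Q_{\mu_n}(t)\to Q(t)$ and $Q_{\nu_n}(t)\to Q(t)$ for each such $t$; as the $Q_{\mu_n}$ and $Q_{\nu_n}$ are nondecreasing and the limit $Q$ is continuous, this pointwise convergence is automatically uniform on every compact subinterval of $(0,1)$. Near the endpoints one uses the second hypothesis: for $t$ close to $0$, monotonicity gives $Q_{\mu_n}(t)\ge\min\Lambda_n\ge m-\epsilon_n$, while for any $\delta>0$ we have $\mu((-\infty,m+\delta))=\mu([m,m+\delta))>0$ because $m\in\mathrm{supp}\,\mu$, whence by the portmanteau theorem $\liminf_n\mu_n((-\infty,m+\delta))>0$, which forces $Q_{\mu_n}(t)\le m+\delta$ for all sufficiently small $t$ and all large $n$; since also $Q(t)\in[m,m+\delta]$ for $t$ small, this controls $|Q_{\mu_n}-Q|$ on a fixed neighbourhood of $0$ inside $(0,1)$, and the symmetric argument at $1$ uses $\max\Lambda_n\le M+\epsilon_n$ and $\mu((M-\delta,\infty))>0$. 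Hence $\|Q_{\mu_n}-Q\|_{L^\infty(0,1)}\to0$, and the very same reasoning — now with the automatic bounds $m\le f(x_{i,n})\le M$ in place of the $\epsilon_n$-bounds — yields $\|Q_{\nu_n}-Q\|_{L^\infty(0,1)}\to0$.

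To conclude, note that the sorted vectors appearing in the statement are sampled quantile functions: writing $t_i^{(n)}=(2i-1)/(2d_n)\in(0,1)$, one checks directly from the definitions that $\lambda_{\tau_n(i),n}=Q_{\mu_n}(t_i^{(n)})$ and $f(x_{\sigma_n(i),n})=Q_{\nu_n}(t_i^{(n)})$ for $i=1,\ldots,d_n$, and therefore
\[
\max_{i=1,\ldots,d_n}\bigl|f(x_{\sigma_n(i),n})-\lambda_{\tau_n(i),n}\bigr|\le\|Q_{\nu_n}-Q\|_{L^\infty(0,1)}+\|Q-Q_{\mu_n}\|_{L^\infty(0,1)}\longrightarrow0 .
\]
The ``in particular'' part then needs no rearrangement-optimality: with the single permutation $\tau=\tau_n\circ\sigma_n^{-1}$ we get $\max_i|f(x_{i,n})-\lambda_{\tau(i),n}|=\max_j|f(x_{\sigma_n(j),n})-\lambda_{\tau_n(j),n}|\to0$, so the minimum over all $\tau$ is squeezed to $0$. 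I expect the main obstacle to be the endpoint upgrade in the third paragraph: weak convergence of the $\mu_n$ together with a fixed compact support is \emph{not} enough (an asymptotically vanishing amount of mass drifting towards the boundary of that support would already destroy uniform quantile convergence), and it is precisely the shrinking containment $\Lambda_n\subseteq[m-\epsilon_n,M+\epsilon_n]$ that rules this out; a secondary technical point is the Riemann-sum statement for a.u.\ grids applied to the merely a.e.-continuous integrand $F\circ f$.
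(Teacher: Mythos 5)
Your proof is correct. Note, however, that the paper does not prove Theorem~2.1 at all: it is imported verbatim from the reference \cite{sdau} (``From asymptotic distribution and vague convergence to uniform convergence\ldots''), so there is no in-paper argument to compare yours against. On its own merits, your quantile-function route is sound and essentially self-contained. The key steps all check out: the identification $\mathrm{supp}\,\mu=\mathcal{ER}(f)=[m,M]$ and the consequent continuity of $Q$ on $[0,1]$; weak convergence of both empirical measures (for $\nu_n$ the sandwiching of the Riemann-integrable integrand $F\circ f$ between continuous functions is exactly the right fix for the fact that an a.u.\ grid need not be a tagged partition in the usual sense); the upgrade from pointwise to locally uniform convergence of the quantile functions via monotonicity plus continuity of the limit; and, crucially, the endpoint control, where you correctly isolate the role of the hypothesis $\Lambda_n\subseteq[m-\epsilon_n,M+\epsilon_n]$ --- without it a vanishing fraction of eigenvalues escaping to infinity would wreck the sup-norm matching of sorted values even though the distributional limit is unaffected. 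The identification of the sorted vectors with $Q_{\mu_n}$ and $Q_{\nu_n}$ sampled at $(2i-1)/(2d_n)$ and the reduction of the ``in particular'' clause to the single permutation $\tau_n\circ\sigma_n^{-1}$ are both correct. The only cosmetic caveat is that several standard facts (portmanteau, convergence of quantiles at continuity points, P\'olya's theorem) are invoked rather than proved, but that is appropriate for an argument of this kind.
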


To properly state Theorem~\ref{thm:mmd}, we need the following definition.

\begin{definition}[\textbf{local extremum points}]\label{wlep}
Given a function $f:[a,b]\to\mathbb R$ and a point $x_0\in[a,b]$, we say that $x_0$ is a local maximum point (resp., local minimum point) for $f$ if $f(x_0)\ge f(x)$ (resp., $f(x_0)\le f(x)$) for all $x$ belonging to a neighborhood of $x_0$ in $[a,b]$.
\end{definition}

We point out that, according to Definition~\ref{wlep}, by ``local maximum/minimum point'' we mean ``weak local maximum/minimum point''.
For example, if $f$ is constant on $[a,b]$, then all points of $[a,b]$ are both local maximum and local minimum points for $f$.

\begin{theorem}\label{thm:mmd}
Let $f:[a,b]\to\mathbb R$ be bounded with a finite number of local maximum points, local minimum points, and discontinuity points, and with $\mathcal{ER}(f)=[\inf_{[a,b]}f,\sup_{[a,b]}f]$. Let $\{\Lambda_n=\{\lambda_{1,n},\ldots,\lambda_{d_n,n}\}\}_n$ be a sequence of finite multisets of real numbers such that $d_n\to\infty$ as $n\to\infty$. Assume the following. 
\begin{itemize}[nolistsep,leftmargin=*]
	\item $\{\Lambda_n\}_n\sim f$. 
	\item $\Lambda_n\subseteq f([a,b])$ for every $n$.
\end{itemize}
Then, for every $n$, there exist an a.u.\ grid $\{x_{i,n}\}_{i=1,\ldots,d_n}$ in $[a,b]$ and a permutation $\tau_n$ of $\{1,\ldots,d_n\}$ such that
\[ \lambda_{\tau_n(i),n}=f(x_{i,n}),\qquad i=1,\ldots,d_n. \]
\end{theorem}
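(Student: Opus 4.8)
The plan is to reduce to the case of a strictly monotone continuous symbol by cutting $[a,b]$ into monotonicity pieces, and then to build the grid piece by piece. First I would use the finiteness hypotheses to fix a partition $a=c_0<c_1<\cdots<c_s=b$ such that on each open interval $(c_{k-1},c_k)$ the restriction of $f$ is continuous and \emph{strictly} monotone: it is continuous because all discontinuity points of $f$ are among the $c_i$, monotone because it has no interior local extrema, and strictly monotone because an interval of constancy would produce infinitely many (weak) local extremum points, which is excluded. Being bounded, $f|_{(c_{k-1},c_k)}$ then extends to a homeomorphism $\varphi_k:[c_{k-1},c_k]\to J_k$ onto a closed interval $J_k$, with $\varphi_k=f$ on $(c_{k-1},c_k)$; note that $f$ and the family $\{\varphi_k\}$ can disagree only at the finitely many breakpoints, which will later call for some ad hoc bookkeeping.

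Next, let $\mu_f$ be the push-forward of the normalized Lebesgue measure of $[a,b]$ through $f$, so that $\{\Lambda_n\}_n\sim f$ says that the empirical measures $\tfrac1{d_n}\sum_{i=1}^{d_n}\delta_{\lambda_{i,n}}$ converge weakly to $\mu_f=\sum_{k=1}^s\nu_k$, where $\nu_k$ is the push-forward of $\tfrac1{b-a}$ times Lebesgue measure on $[c_{k-1},c_k]$ through $f$ (equivalently through $\varphi_k$, since they agree a.e.); each $\nu_k$ is carried by $J_k$ and has a continuous distribution function because $\varphi_k$ is injective. The heart of the argument is to split, for every $n$, the multiset $\Lambda_n$ into sub-multisets $\Lambda_n^{(1)},\dots,\Lambda_n^{(s)}$ with $\Lambda_n^{(k)}\subseteq f([c_{k-1},c_k])$ and $\tfrac1{d_n}\sum_{\lambda\in\Lambda_n^{(k)}}\delta_\lambda\to\nu_k$ weakly (hence $|\Lambda_n^{(k)}|/d_n\to(c_k-c_{k-1})/(b-a)$). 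I would do this greedily: pass to the common refinement of the intervals $J_1,\dots,J_s$, and on each cell of the refinement distribute the eigenvalues lying in that cell among the pieces whose image contains the cell, selecting atoms so that each chosen sub-sample tracks the (continuous) distribution function of the corresponding $\nu_k$ up to $o(1)$ --- uniform convergence of the empirical distribution functions (P\'olya) is what makes this selection legitimate. The finitely many eigenvalues equal to one of the exceptional values at breakpoints are $o(d_n)$ in number, since each such value carries zero $\mu_f$-mass, and they are assigned to an admissible piece by hand; this perturbation is immaterial for every asymptotic statement below.

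Once the splitting is in place, for each $k$ and each non-exceptional $\lambda\in\Lambda_n^{(k)}$ I take the preimage $\varphi_k^{-1}(\lambda)\in(c_{k-1},c_k)$, for which $f(\varphi_k^{-1}(\lambda))=\varphi_k(\varphi_k^{-1}(\lambda))=\lambda$, and collect these points, together with the ad hoc preimages of the exceptional eigenvalues, into a multiset $G_n^{(k)}\subseteq[c_{k-1},c_k]$. Applying the continuous map $\varphi_k^{-1}$ to the multisets $\Lambda_n^{(k)}$ and using the splitting gives $\{G_n^{(k)}\}_n\sim\varphi_k^{-1}\circ\varphi_k=\mathrm{id}_{[c_{k-1},c_k]}$; since $G_n^{(k)}\subseteq[c_{k-1},c_k]$, Theorem~\ref{eccoGi-n} applied with the identity symbol on $[c_{k-1},c_k]$, comparing against the uniform grid itself, shows that $\{G_n^{(k)}\}_n$ is a.u.\ in $[c_{k-1},c_k]$. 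Setting $\mathcal G_n=G_n^{(1)}\cup\cdots\cup G_n^{(s)}$, a grid of $d_n$ points in $[a,b]$, and letting $\tau_n$ be the permutation recording the pairing of its points with the elements of $\Lambda_n$, we get $\lambda_{\tau_n(i),n}=f(x_{i,n})$ by construction. Finally, the empirical measures of $\mathcal G_n$ converge to $\sum_k\tfrac{c_k-c_{k-1}}{b-a}$ times the uniform law on $[c_{k-1},c_k]$, that is, to the uniform law on $[a,b]$, so $\{\mathcal G_n\}_n\sim\mathrm{id}_{[a,b]}$ with $\mathcal G_n\subseteq[a,b]$, and one last application of Theorem~\ref{eccoGi-n} yields that $\mathcal G_n$ is a.u.\ in $[a,b]$.

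The hard part will be the splitting step: one must cut $\Lambda_n$ so that each part simultaneously lands in the correct portion of the range $f([a,b])$ \emph{and} has the correct limiting sub-distribution, and this for every finite $n$; the overlaps between the images $J_k$ and the finitely many exceptional values at breakpoints are precisely where the bookkeeping is delicate. Everything else I expect to be routine: the behaviour of weak limits of empirical measures under push-forward by continuous maps, and Theorem~\ref{eccoGi-n} in the trivial special case of the identity symbol.
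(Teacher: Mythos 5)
The first thing to note is that the paper itself does not prove Theorem~\ref{thm:mmd}: it is imported from \cite{sdau} as a known result, so there is no in-paper argument to compare yours against. Judged on its own terms, your strategy is sound and I do not see a step that would fail. The reduction to finitely many branches on which $f$ is continuous and strictly monotone is correct (an interval of constancy would create infinitely many weak local extrema, and a continuous function with no interior local extremum points on an open interval is injective, hence strictly monotone); the extensions $\varphi_k$ are indeed homeomorphisms onto closed intervals; the hypothesis $\Lambda_n\subseteq f([a,b])$ is exactly what guarantees that every eigenvalue --- including those equal to one of the finitely many ``exceptional'' values --- has some $f$-preimage, and by the portmanteau theorem these exceptional eigenvalues are $o(d_n)$ in number since $\mu_f$ is atomless. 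The closing move --- weak convergence of the empirical measures of $\mathcal G_n$ to the uniform law on $[a,b]$ plus $\mathcal G_n\subseteq[a,b]$, then Theorem~\ref{eccoGi-n} with the identity symbol against the uniform reference grid --- correctly yields $m(\mathcal G_n)\to0$ once $\mathcal G_n$ is indexed in increasing order, which is all the conclusion requires since $\tau_n$ is free.

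The one place where you have a sketch rather than a proof is, as you acknowledge, the splitting of $\Lambda_n$ into $\Lambda_n^{(1)},\ldots,\Lambda_n^{(s)}$, and this step does need to be written out: on each cell $I_j$ of the common refinement one has $\mu_f|_{I_j}=\sum_{k\in S_j}\nu_k|_{I_j}$ with $S_j=\{k:I_j\subseteq J_k\}$, and assigning the sorted eigenvalues of $\Lambda_n\cap I_j$ greedily to the branch with the largest current deficit $d_n\,\nu_k(I_j\cap(-\infty,y_\ell])-N_k(\ell-1)$ keeps every deficit $o(d_n)$ uniformly (their sum is $o(d_n)$ by P\'olya's theorem, and the greedy rule prevents any single deficit from drifting), which gives the required weak convergence $\frac1{d_n}\sum_{\lambda\in\Lambda_n^{(k)}}\delta_\lambda\to\nu_k$. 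With that spelled out the proof is complete. Two smaller points to tighten: (i) the ``exceptional'' set must include eigenvalues equal to \emph{endpoints} of the $J_k$, not only to the values $f(c_i)$, since for such $\lambda$ the point $\varphi_k^{-1}(\lambda)$ is a breakpoint where $f$ and $\varphi_k$ may disagree; this costs nothing, as these are again finitely many values of $\mu_f$-measure zero. (ii) Your route never uses the hypothesis $\mathcal{ER}(f)=[\inf_{[a,b]}f,\sup_{[a,b]}f]$ (for the identity symbol it holds trivially), so you would be proving a slightly more general statement; that is not an error, but it is worth double-checking that connectedness of the range is not used tacitly anywhere in your bookkeeping.
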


\subsection{Toeplitz matrices}

It is not difficult to see that the operator $T_n(\cdot):L^1([-\pi,\pi])\to\mathbb C^{n\times n}$, which associates with each $f\in L^1([-\pi,\pi])$ the corresponding $n\times n$ Toeplitz matrix $T_n(f)$, is linear and satisfies $T_n(1)=I_n$, where $I_n$ is the $n\times n$ identity matrix. Moreover, the conjugate transpose of $T_n(f)$ is given by
\begin{equation*}
T_n(f)^*=T_n(\overline f)
\end{equation*}
for every $f\in L^1([-\pi,\pi])$ and every $n$; see, e.g., \cite[Section~6.2]{GLTbookI}.
In particular, if $f$ is real a.e., then $\overline f=f$ a.e.\ and the matrices $T_n(f)$ are Hermitian. Moreover, if $f$ is real a.e.\ and even, then its Fourier coefficients are real and even (see Lemma~\ref{feven} below), and therefore the matrices $T_n(f)$ are real and symmetric.
The next theorem collects some properties of Toeplitz matrices generated by a real function.

\begin{theorem}\label{Toep-th}
Let $f\in L^1([-\pi,\pi])$ be real and let
\[ m_f=\mathop{\rm ess\,inf}_{[-\pi,\pi]}f,\qquad M_f=\mathop{\rm ess\,sup}_{[-\pi,\pi]}f. \]
Then, the following properties hold.
\begin{enumerate}[nolistsep,leftmargin=*]
	\item $T_n(f)$ is Hermitian and the eigenvalues of $T_n(f)$ lie in the interval $[m_f,M_f]$ for all $n$.
	\item If $f$ is not a.e.\ constant, then $m_f<M_f$ and the eigenvalues of $T_n(f)$ lie in $(m_f,M_f)$ for all $n$.
	\item $\{T_n(f)\}_n\sim_\lambda f$.
\end{enumerate}
\end{theorem}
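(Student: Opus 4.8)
The plan is to establish the three parts in order: Parts~1 and~2 will follow directly from the fundamental identity for the quadratic form of a Toeplitz matrix, while Part~3 is the classical Szeg\H{o}-type spectral distribution theorem. For Part~1, since $f$ is real we have $\overline f=f$, hence $T_n(f)^*=T_n(\overline f)=T_n(f)$, so $T_n(f)$ is Hermitian. For the eigenvalue bounds I would start from the identity
\[ \uu^*T_n(f)\uu=\frac1{2\pi}\int_{-\pi}^\pi f(x)\,|p_\uu(x)|^2\,{\rm d}x,\qquad p_\uu(x)=\sum_{k=1}^n u_k\,{\rm e}^{{\rm i}kx}, \]
which holds for every $\uu=(u_1,\ldots,u_n)\in\mathbb C^n$ and is obtained by writing each $f_{i-j}$ as a Fourier integral and recognizing the resulting double sum as $|p_\uu(x)|^2$ (the integrand lies in $L^1$ because $f\in L^1$ and $p_\uu$ is a bounded trigonometric polynomial). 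Since $m_f\le f\le M_f$ a.e.\ (because $f\in\mathcal{ER}(f)$ a.e.) and $\frac1{2\pi}\int_{-\pi}^\pi|p_\uu(x)|^2\,{\rm d}x=\|\uu\|_2^2$ by Parseval, this yields $m_f\|\uu\|_2^2\le\uu^*T_n(f)\uu\le M_f\|\uu\|_2^2$, and the Courant--Fischer min--max characterization gives $\lambda_j(T_n(f))\in[m_f,M_f]$ for all $j$ and $n$.

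For Part~2, if $f$ is not a.e.\ constant then $\mathcal{ER}(f)$ contains at least two points, whence $m_f=\inf\mathcal{ER}(f)<\sup\mathcal{ER}(f)=M_f$. For the strict inclusion, suppose that $\lambda_j(T_n(f))=M_f$ for some $j$ and $n$, with unit eigenvector $\uu$; then $\frac1{2\pi}\int_{-\pi}^\pi(M_f-f(x))\,|p_\uu(x)|^2\,{\rm d}x=M_f-\uu^*T_n(f)\uu=0$, and since $M_f-f\ge0$ a.e.\ and $|p_\uu|^2\ge0$, the product $(M_f-f)\,|p_\uu|^2$ vanishes a.e. But $\uu\ne0$, so $p_\uu$ is a nonzero trigonometric polynomial and $\{p_\uu=0\}$ is a finite set; hence $f=M_f$ a.e., contradicting the assumption on $f$. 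The same argument applied to $f-m_f$ excludes $\lambda_j(T_n(f))=m_f$, and together with Part~1 this gives $\lambda_j(T_n(f))\in(m_f,M_f)$ for all $j$ and $n$.

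Part~3 is the classical Szeg\H{o}-type asymptotic spectral distribution theorem; I would either cite it directly from \cite{BoSi,SC-matrix,GLTbookI} or prove it in the usual two steps. In the first step, for a real trigonometric polynomial $g$ and every integer $s\ge0$, the matrix $T_n(g)^s-T_n(g^s)$ has rank and spectral norm bounded independently of $n$, so that $\frac1n{\rm tr}(T_n(g)^s)-\frac1{2\pi}\int_{-\pi}^\pi g(x)^s\,{\rm d}x\to0$ (using $\frac1n{\rm tr}(T_n(g^s))=\frac1{2\pi}\int_{-\pi}^\pi g(x)^s\,{\rm d}x$); expanding a polynomial $F$ into monomials, and then invoking the Weierstrass theorem together with the uniform localization from Part~1, yields \eqref{sd} for $f=g$. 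In the second step, for a general real $f\in L^1$ one approximates $f$ in $L^1([-\pi,\pi])$ by real trigonometric polynomials $g_m$, and uses the bound $\frac1n\|T_n(f)-T_n(g_m)\|_1\le\frac1{2\pi}\|f-g_m\|_{L^1}$ together with a Lidskii/Mirsky-type estimate to control $\frac1n\sum_{i=1}^n F(\lambda_i(T_n(f)))-\frac1n\sum_{i=1}^n F(\lambda_i(T_n(g_m)))$ for Lipschitz $F$, before letting $n\to\infty$ and then $m\to\infty$. I expect this last interchange of limits --- equivalently, the verification that $\{T_n(g_m)\}_m$ is an approximating class of sequences for $\{T_n(f)\}$ --- to be the only genuine obstacle; Parts~1 and~2 are elementary.
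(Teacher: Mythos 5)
Your argument is correct. The paper does not prove this theorem itself --- its ``proof'' is a one-line citation of \cite[Theorems~6.1 and~6.5]{GLTbookI} --- and your sketch (the quadratic-form identity $\uu^*T_n(f)\uu=\frac1{2\pi}\int_{-\pi}^\pi f\,|p_\uu|^2$ plus Courant--Fischer for items 1--2, the finiteness of the zero set of a nonzero trigonometric polynomial for the strict bounds, and the Szeg\H{o}-type trace-of-powers/approximation argument for item 3) is precisely the standard proof found in that reference, so no gap remains.
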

\begin{proof}
See \cite[Theorems~6.1 and~6.5]{GLTbookI}.
\end{proof}

The next localization result for the singular values of Toeplitz matrices is due to Widom \cite[Lemma~I.2]{Widom}.

\begin{lemma}\label{Widom's}
Let $f\in L^1([-\pi,\pi])$ and let $d$ be the distance of the complex zero from the convex hull of the essential range $\mathcal{ER}(f)$. Then, the singular values of $T_n(f)$ lie in $[d,M_{|f|}]$ for all $n$, where $M_{|f|}=\mathop{\rm ess\,sup}_{[-\pi,\pi]}|f|$.
\end{lemma}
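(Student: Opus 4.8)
The plan is to reduce the statement to a two-sided bound on $T_n(f)^* T_n(f)$ in the L\"owner ordering, since the squared singular values of $T_n(f)$ are the eigenvalues of this Hermitian positive semidefinite matrix. First I would recall that $T_n(f)^* T_n(f) = T_n(\overline f)\, T_n(f)$, and that for the Frobenius-type identity one has $T_n(\overline f)\, T_n(f) = T_n(|f|^2) - R_n$, where $R_n$ is the ``Hankel correction'' arising because $T_n$ is not multiplicative; crucially $R_n$ is positive semidefinite, being a sum of squares of Hankel-type blocks (this is exactly the content of Widom's lemma and the Brown--Halmos / Widom identity). Granting $R_n \succeq 0$, the upper bound is immediate: $\sigma_{\max}(T_n(f))^2 = \lambda_{\max}(T_n(\overline f)T_n(f)) \le \lambda_{\max}(T_n(|f|^2)) \le M_{|f|^2} = M_{|f|}^2$, using Theorem 1.1(1) applied to the real symbol $|f|^2$.

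For the lower bound, the idea is to bound $\sigma_{\min}(T_n(f))$ from below by $|u^* T_n(f) u|$ minimized over unit vectors $u$, and to observe that $u^* T_n(f) u = u^* T_n(g) u$ where $g$ is obtained from $f$ by projecting its values onto the direction of $u$ in a suitable sense — more concretely, one writes $u^* T_n(f) u = \frac{1}{2\pi}\int_{-\pi}^\pi f(x)\, |p_u(x)|^2\, dx$ where $p_u(x) = \sum_{k=1}^n u_k e^{\mathrm{i}(k-1)x}$ is the associated trigonometric polynomial, and $\frac{1}{2\pi}\int |p_u|^2 = \|u\|^2 = 1$. Thus $u^* T_n(f) u$ is a weighted average of the values of $f$ with respect to the probability density $|p_u|^2/(2\pi)$, hence it lies in the convex hull of $\mathcal{ER}(f)$ (up to a null set, which does not matter since $|p_u|^2$ is continuous and the essential range already ignores null sets). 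Consequently $|u^* T_n(f) u| \ge d := \mathrm{dist}(0, \mathrm{conv}\,\mathcal{ER}(f))$, and since $\sigma_{\min}(T_n(f)) = \min_{\|u\|=1} \|T_n(f) u\| \ge \min_{\|u\|=1} |u^* T_n(f) u| \ge d$, we are done.

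The main obstacle is making the lower-bound argument fully rigorous when $\mathcal{ER}(f)$ is not convex or when $f \in L^1$ only (so $|p_u|^2 f \in L^1$ but one must be careful that the weighted average genuinely lands in $\mathrm{conv}\,\mathcal{ER}(f)$ rather than merely in $\overline{\mathrm{conv}\, f([-\pi,\pi])}$). The clean way around this is: for any closed half-plane $\mathcal H = \{z : \mathrm{Re}(\bar\omega z) \ge c\}$ with $|\omega|=1$ that contains $\mathcal{ER}(f)$, we have $\mathrm{Re}(\bar\omega f(x)) \ge c$ for a.e.\ $x$, hence $\mathrm{Re}(\bar\omega\, u^*T_n(f)u) = \frac{1}{2\pi}\int \mathrm{Re}(\bar\omega f(x))|p_u(x)|^2 dx \ge c$; intersecting over all such half-planes shows $u^* T_n(f) u \in \mathrm{conv}\,\mathcal{ER}(f)$ (using that a closed convex set is the intersection of the closed half-planes containing it, and that $\mathrm{conv}\,\mathcal{ER}(f)$ is bounded since $f\in L^1$ forces $\mathcal{ER}(f)$... actually $\mathcal{ER}(f)$ can be unbounded for $f\in L^1$, in which case $d$ may be $0$ or the closed convex hull must be taken; I would simply assume $d$ is defined via the closed convex hull and note the bound is vacuous when $d=0$). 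Alternatively, if one prefers to avoid reproving Widom's lemma, one can just cite \cite[Lemma~I.2]{Widom} directly as the paper already does; but for completeness the half-plane/Rayleigh-quotient argument above is short and self-contained, and I would include it as the proof.
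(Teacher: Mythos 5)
Your proposal is correct, but it is worth noting that the paper does not prove this lemma at all: it simply cites Widom \cite[Lemma~I.2]{Widom}, so any comparison is with Widom's original argument rather than with the paper. Your lower bound is essentially Widom's own proof: the chain $\sigma_{\min}(T_n(f))=\min_{\|u\|=1}\|T_n(f)u\|\ge\min_{\|u\|=1}|u^*T_n(f)u|$ is valid by Cauchy--Schwarz, the identity $u^*T_n(f)u=\frac1{2\pi}\int_{-\pi}^{\pi}f(x)|p_u(x)|^2\,{\rm d}x$ with $\frac1{2\pi}\int|p_u|^2=\|u\|^2=1$ is correct, and your half-plane argument properly places this average in the \emph{closed} convex hull of $\mathcal{ER}(f)$ (using $f\in\mathcal{ER}(f)$ a.e.\ and $f|p_u|^2\in L^1$). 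The worry you raise about closure versus non-closure is moot for the conclusion, since the distance from $0$ to a convex set equals the distance to its closure, so $|u^*T_n(f)u|\ge d$ follows either way; likewise unboundedness of $\mathcal{ER}(f)$ causes no harm (the intersection of the --- possibly empty --- family of closed half-planes containing a set is its closed convex hull). For the upper bound, your route through $T_n(\overline f)T_n(f)=T_n(|f|^2)-R_n$ with $R_n\succeq0$ is correct (the two correction terms in the finite-section product formula are of the form $PAA^*P$), but it is heavier than necessary and requires $|f|^2\in L^1$; you should note that the bound is vacuous unless $M_{|f|}<\infty$, in which case $f\in L^\infty\subseteq L^2$ and the identity applies. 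A lighter alternative that reuses your own bilinear-form representation is $|v^*T_n(f)u|=\bigl|\frac1{2\pi}\int f\,p_u\overline{p_v}\bigr|\le M_{|f|}\bigl(\frac1{2\pi}\int|p_u|^2\bigr)^{1/2}\bigl(\frac1{2\pi}\int|p_v|^2\bigr)^{1/2}=M_{|f|}$ for unit vectors $u,v$, whence $\sigma_{\max}(T_n(f))\le M_{|f|}$ without invoking the product formula. With these small repairs your argument is a complete, self-contained proof of the lemma.
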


\subsection{Flipped Toeplitz matrices}
If $f:[-\pi,\pi]\to\mathbb C$ is a function in $L^1([-\pi,\pi])$ and $n\in\mathbb N$, we define the Hankel matrix
\[ H_n(f)=Y_nT_n(f), \]
where $Y_n=H_n(1)$ is the $n\times n$ exchange matrix in \eqref{flipM}. We remark that our definition of $H_n(f)$ is different from the standard definition of Hankel matrices generated by a function $f$ \cite[Section~11.4]{BoSi}. We therefore refer to $H_n(f)$ as the flipped Toeplitz matrix generated by $f$ rather than the Hankel matrix generated by $f$.
A vector $\vv\in\mathbb R^n$ is called symmetric if $Y_n\vv=\vv$ and skew-symmetric if $Y_n\vv=-\vv$.

\begin{remark}[\textbf{eigendecomposition of the exchange matrix}]\label{antidiagonal_eig}
Let $\mathbb V_n^+$ and $\mathbb V_n^-$ be the subspaces of $\mathbb R^n$ consisting of symmetric and skew-symmetric vectors, respectively:
\[ \mathbb V_n^+=\{\vv\in\mathbb R^n:Y_n\vv=\vv\},\qquad\mathbb V_n^-=\{\vv\in\mathbb R^n:Y_n\vv=-\vv\}. \]
It is easy to see that $\dim\mathbb V_n^+=\lceil n/2\rceil$ and $\dim\mathbb V_n^-=\lfloor n/2\rfloor$. Indeed, a basis for $\mathbb V_n^+$ is
\[ \boldsymbol e_i+\boldsymbol e_{n-i+1},\qquad i=1,\ldots,\lceil n/2\rceil, \]
and a basis for $\mathbb V_n^-$ is
\[ \boldsymbol e_i-\boldsymbol e_{n-i+1},\qquad i=1,\ldots,\lfloor n/2\rfloor, \]
where $\boldsymbol e_1,\ldots,\boldsymbol e_n$ are the vectors of the canonical basis of $\mathbb R^n$.
Note that $\mathbb V_n^+$ and $\mathbb V_n^-$ are, by definition, eigenspaces of $Y_n$ associated with the eigenvalues $1$ and $-1$, respectively, and we have $\dim\mathbb V_n^++\dim\mathbb V_n^-=n$. This yields the eigendecomposition of the exchange matrix $Y_n$, which has only two distinct eigenvalues $1$ and $-1$ with corresponding eigenspaces $\mathbb V_n^+$ and $\mathbb V_n^-$. We can thus write the eigendecomposition of $Y_n$ as follows:
\begin{equation}\label{exchange_eig}
Y_n=V_n\Delta_nV_n^{-1},\qquad\Delta_n=\mathop{\rm diag}_{i=1,\ldots,n}(-1)^{i+1},\qquad V_n=\left[\vv_1\,|\,\vv_2\,|\,\cdots\,|\,\vv_n\right],
\end{equation}
where $\{\vv_i:i\mbox{ is odd}\}$ is any basis of $\mathbb V_n^+$ and $\{\vv_i:i\mbox{ is even}\}$ is any basis of $\mathbb V_n^-$.
\end{remark}

If $f:[-\pi,\pi]\to\mathbb C$ is a function in $L^1([-\pi,\pi])$ with real Fourier coefficients, then $T_n(f)$ is real for all $n$.
In this case, $H_n(f)$ is real and symmetric for every $n$, and the next theorem appeared in \cite{SIMAX-hankel,BIT-hankel} gives the asymptotic spectral distribution of the matrix-sequence $\{H_n(f)\}_n$.

\begin{theorem}\label{r-hankel}
If $f$ is a function in $L^1([-\pi,\pi])$ with real Fourier coefficients then $\{H_n(f)\}_n\sim_\lambda H$, where
\[ H:[-\pi,\pi]\to\mathbb C^{2\times2},\qquad H(x)={\rm diag}(f(x),-f(x)). \]
\end{theorem}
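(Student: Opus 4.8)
The plan is to block-diagonalize $H_n(f)$ by exploiting that the exchange matrix $Y_n$ commutes with $T_n(f)$. Since $f$ has real Fourier coefficients $T_n(f)$ is real, and writing out the entries shows $Y_nT_n(f)Y_n=T_n(f)^\top$ for every $f$; when moreover $T_n(f)$ is symmetric (that is, $f$ is even a.e., the case in which $H(x)=\mathrm{diag}(f(x),-f(x))$ has real eigenvalues) this becomes $Y_nT_n(f)=T_n(f)Y_n$. (When $f$ is complex-valued $T_n(f)$ need not be symmetric; one then starts instead from $H_n(f)^2=Y_nT_n(f)Y_nT_n(f)=T_n(f)^*T_n(f)$, whose eigenvalues are the squared singular values of $T_n(f)$, and works with $|f|$ in place of $f$.) Assuming now $T_n(f)$ real symmetric, $Y_n$ and $T_n(f)$ are commuting symmetric matrices, so $\mathbb R^n$ decomposes as the orthogonal sum $\mathbb V_n^+\oplus\mathbb V_n^-$ of the $+1$ and $-1$ eigenspaces of $Y_n$ from Remark~\ref{antidiagonal_eig}, each $T_n(f)$-invariant. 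Let $A_n^+$ and $A_n^-$ be the symmetric compressions of $T_n(f)$ to $\mathbb V_n^+$ (size $\lceil n/2\rceil$) and $\mathbb V_n^-$ (size $\lfloor n/2\rfloor$). Because $Y_n$ acts as $+I$ on $\mathbb V_n^+$ and $-I$ on $\mathbb V_n^-$, the matrix $H_n(f)=Y_nT_n(f)$ is orthogonally similar to $A_n^+\oplus(-A_n^-)$, hence
\[ \{\lambda_i(H_n(f))\}_{i=1}^n=\{\lambda_i(A_n^+)\}_{i=1}^{\lceil n/2\rceil}\ \uplus\ \{-\lambda_i(A_n^-)\}_{i=1}^{\lfloor n/2\rfloor}. \]
This is the ``$\lceil n/2\rceil$ eigenvalues in common with $T_n(f)$, $\lfloor n/2\rfloor$ in common with $-T_n(f)$'' picture announced in the introduction, and it reduces Theorem~\ref{r-hankel} to showing $\{A_n^+\}_n\sim_\lambda f$ and $\{A_n^-\}_n\sim_\lambda f$.

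To identify $A_n^\pm$, I would use the orthonormal bases $\{(\boldsymbol e_i\pm\boldsymbol e_{n-i+1})/\sqrt2\}$ of $\mathbb V_n^\pm$ from Remark~\ref{antidiagonal_eig}: with $f_{-k}=f_k$ one gets $(A_n^\pm)_{ij}=f_{i-j}\pm f_{i+j-n-1}$ for $1\le i,j\le\lfloor n/2\rfloor$, so $A_n^\pm$ coincides with the Toeplitz matrix $T_{\nu_n^\pm}(f)$ ($\nu_n^+=\lceil n/2\rceil$, $\nu_n^-=\lfloor n/2\rfloor$) up to the Hankel-type correction $\pm[f_{i+j-n-1}]$ (plus, for odd $n$, one rescaled row and column). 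Since trigonometric polynomials are dense in $L^1$ and the eigenvalue distribution is stable under approximating classes of sequences — legitimate here because $H_n(f)$ and the $A_n^\pm$ are Hermitian — it suffices to treat $f$ a trigonometric polynomial; then, because $f_k=0$ for $|k|>\deg f$, the correction $[f_{i+j-n-1}]$ vanishes outside a bounded corner block (the constraint $i+j=n+1+k$ with $|k|\le\deg f$ forces $i,j\ge n/2-\deg f$), hence has bounded rank. As a bounded-rank Hermitian perturbation does not change the asymptotic eigenvalue distribution, and $\{T_m(f)\}_m\sim_\lambda f$ by Theorem~\ref{Toep-th}(3), we obtain $\{A_n^+\}_n\sim_\lambda f$ and $\{A_n^-\}_n\sim_\lambda f$.

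It then remains to assemble the two halves. For $F\in C_c(\mathbb C)$,
\[ \frac1n\sum_{i=1}^nF(\lambda_i(H_n(f)))=\frac{\lceil n/2\rceil}{n}\cdot\frac1{\lceil n/2\rceil}\sum_{i=1}^{\lceil n/2\rceil}F(\lambda_i(A_n^+))+\frac{\lfloor n/2\rfloor}{n}\cdot\frac1{\lfloor n/2\rfloor}\sum_{i=1}^{\lfloor n/2\rfloor}F(-\lambda_i(A_n^-)); \]
by the previous step the first average tends to $\frac1{2\pi}\int_{-\pi}^\pi F(f(x))\,\mathrm{d}x$, while the second, obtained by applying $\{A_n^-\}_n\sim_\lambda f$ to the test function $F(-\,\cdot\,)$, tends to $\frac1{2\pi}\int_{-\pi}^\pi F(-f(x))\,\mathrm{d}x$. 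Since $\lceil n/2\rceil/n$ and $\lfloor n/2\rfloor/n$ tend to $1/2$, the left-hand side converges to
\[ \frac1{2\pi}\int_{-\pi}^\pi\frac{F(f(x))+F(-f(x))}{2}\,\mathrm{d}x=\frac1{2\pi}\int_{-\pi}^\pi\frac{\sum_{i=1}^2F(\lambda_i(H(x)))}{2}\,\mathrm{d}x, \]
since the eigenvalues of $H(x)=\mathrm{diag}(f(x),-f(x))$ are $f(x)$ and $-f(x)$. This is exactly \eqref{sd} for the pair $(\{H_n(f)\}_n,H)$, so $\{H_n(f)\}_n\sim_\lambda H$.

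The step I expect to be the real obstacle is the control of the Hankel-type correction (and, in the reduction from $L^1$ to trigonometric polynomials, of the truncation error $Y_nT_n(f-p_m)$), together with the justification that these perturbations may be pushed through the eigenvalue distribution: for bounded $f$, in particular for trigonometric polynomials, this is the routine bounded-rank argument sketched above, but the general $L^1$ case needs the trace-norm theory of approximating classes of sequences. The remaining ingredients — the commutation of $Y_n$ with $T_n(f)$, the orthogonal block-diagonalization, and the final averaging with the weights $\lceil n/2\rceil/n$ and $\lfloor n/2\rfloor/n$ — are elementary.
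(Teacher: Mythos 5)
First, a point of reference: the paper does not prove Theorem~\ref{r-hankel} at all --- it is imported from \cite{SIMAX-hankel,BIT-hankel} --- so you are supplying a proof where the paper supplies a citation. For the subcase where $f$ is real and even (equivalently, $T_n(f)$ real symmetric), your argument is sound and is essentially the Cantoni--Butler decomposition of Lemma~\ref{cbt} combined with a low-rank-plus-a.c.s.\ argument: the compressions $A_n^\pm$ onto $\mathbb V_n^\pm$ equal $T_{\lceil n/2\rceil}(f)$ and $T_{\lfloor n/2\rfloor}(f)$ up to a Hankel corner term of rank $O(\deg f)$ when $f$ is a trigonometric polynomial, and the reduction from $L^1$ to polynomials goes through the trace-norm a.c.s.\ machinery of \cite[Chapter~5]{GLTbookI} since all matrices involved are Hermitian. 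That half of the proposal I believe.

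The genuine gap is that the theorem assumes only that the Fourier coefficients of $f$ are real, so $T_n(f)$ is real but in general \emph{not} symmetric, $f$ is in general complex-valued (only $f(-x)=\overline{f(x)}$ holds, cf.\ Lemma~\ref{mod(f)even}), and $Y_n$ does \emph{not} commute with $T_n(f)$: the identity $Y_nT_n(f)Y_n=T_n(f)^\top$ gives the symmetry of $H_n(f)$ but no $T_n(f)$-invariant splitting $\mathbb V_n^+\oplus\mathbb V_n^-$, so your block-diagonalization collapses outside the even case. The fallback you sketch, $H_n(f)^2=T_n(f)^*T_n(f)$, controls only the moduli $|\lambda_i(H_n(f))|=\sigma_i(T_n(f))$ (this is Remark~\ref{eig-singv} and yields $\{H_n(f)^2\}_n\sim_\lambda|f|^2$), but it cannot determine the \emph{signs}: to reach a symbol of the form $\mathrm{diag}(|f|,-|f|)$ one must additionally prove that asymptotically half of the spectrum of $H_n(f)$ sits on each branch, and this does not follow from knowledge of $H_n(f)^2$. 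That sign-splitting is precisely the nontrivial content of \cite{SIMAX-hankel} (obtained there via an explicit low-rank splitting of $H_n(f)$) and of \cite{BIT-hankel} (obtained via block GLT theory), and it is missing from your proposal. Note also that in the non-even case your $|f|$-route would at best yield the symbol $\mathrm{diag}(|f|,-|f|)$, which agrees in distribution with the displayed $\mathrm{diag}(f,-f)$ only when $f$ is real-valued a.e.; this mismatch deserves an explicit comment if you intend to cover the theorem in the stated generality.
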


The next lemma is a corollary of the Cantoni--Butler theorem \cite[Theorem~2]{SC-matrix}.
An $n\times n$ matrix $A$ is called centrosymmetric if it is symmetric with respect to its center, i.e., $A_{ij}=A_{n-i+1,n-j+1}$ for all $i,j=1,\ldots,n$.
Equivalently, $A$ is centrosymmetric if $Y_nAY_n = A$. Note that any symmetric Toeplitz matrix is centrosymmetric.

\begin{lemma}\label{cbt}
Let $T_n$ be a real symmetric Toeplitz matrix of size $n$ and let $H_n=Y_nT_n$. Then, the following properties hold.
\begin{enumerate}[leftmargin=*,nolistsep]
	\item There exists an orthonormal basis of $\mathbb R^n$ consisting of eigenvectors of $T_n$ such that $\lceil n/2\rceil$ vectors of this basis are symmetric and the other $\lfloor n/2\rfloor$ vectors are skew-symmetric.
	\item Let $\{\vv_1,\ldots,\vv_n\}$ be a basis of $\mathbb R^n$ such that, setting $V_n=\left[\vv_1\,|\,\vv_2\,|\,\cdots\,|\,\vv_n\right]$, we have:
	\begin{itemize}[leftmargin=*,nolistsep]
		\item $\vv_i$ is alternatively symmetric or skew-symmetric (starting with symmetric), i.e., in view of Remark~{\rm\ref{antidiagonal_eig}},
		\[ Y_n=V_n\Delta_nV_n^*,\qquad\Delta_n=\mathop{\rm diag}_{i=1,\ldots,n}(-1)^{i+1}; \]
		\item $T_n\vv_i=\lambda_i(T_n)\vv_i$ for $i=1,\ldots,n$, i.e.,
		\[ T_n=V_nD_nV_n^*,\qquad D_n=\mathop{\rm diag}_{i=1,\ldots,n}\lambda_i(T_n). \]
	\end{itemize}
	Then, 
	\begin{itemize}[nolistsep,leftmargin=*]
	\item $H_n\vv_i=\lambda_i(H_n)\vv_i$ with $\lambda_i(H_n)=(-1)^{i+1}\lambda_i(T_n)$ for $i=1,\ldots,n$, i.e.,
	\[ H_n=V_nE_nV_n^*,\qquad E_n=\Delta_nD_n. \]
	\end{itemize}
\end{enumerate}
\end{lemma}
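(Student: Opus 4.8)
\medskip

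The plan is to treat the two assertions in turn: the first is reduced to the simultaneous diagonalization of two commuting real symmetric matrices (this is essentially the Cantoni--Butler theorem), and the second follows from a short matrix computation once a basis of the required form is available.

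For item~1, the starting point is that a real symmetric Toeplitz matrix $T_n$ is centrosymmetric, i.e.\ $Y_nT_nY_n=T_n$; since $Y_n^2=I_n$, this is equivalent to $Y_nT_n=T_nY_n$, so $T_n$ and $Y_n$ commute. Both being real symmetric, $T_n$ leaves invariant each eigenspace of $Y_n$, and by Remark~\ref{antidiagonal_eig} these eigenspaces are the mutually orthogonal subspaces $\mathbb V_n^+$ and $\mathbb V_n^-$, of dimensions $\lceil n/2\rceil$ and $\lfloor n/2\rfloor$. The restriction of $T_n$ to $\mathbb V_n^+$ (resp.\ to $\mathbb V_n^-$) is symmetric with respect to the Euclidean inner product, hence admits an orthonormal basis of eigenvectors of $\mathbb V_n^+$ (resp.\ of $\mathbb V_n^-$); concatenating these two orthonormal bases gives an orthonormal basis of $\mathbb R^n$ made of eigenvectors of $T_n$, of which exactly $\lceil n/2\rceil$ are symmetric and $\lfloor n/2\rfloor$ are skew-symmetric. (Alternatively, one may simply invoke the Cantoni--Butler theorem \cite[Theorem~2]{SC-matrix}.) Since $\{1,\ldots,n\}$ has $\lceil n/2\rceil$ odd indices and $\lfloor n/2\rfloor$ even indices, this basis can be relabelled so that its vectors alternate between symmetric and skew-symmetric, starting with a symmetric one; this is precisely a basis of the form assumed in item~2, with the corresponding $V_n$ orthogonal.

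For item~2, the alternating structure means $Y_n\vv_i=(-1)^{i+1}\vv_i$, i.e.\ $Y_nV_n=V_n\Delta_n$; combining this with $T_n=V_nD_nV_n^*$ and the orthogonality of $V_n$ gives
\[ H_n=Y_nT_n=Y_nV_nD_nV_n^*=V_n\Delta_nD_nV_n^*=V_nE_nV_n^*,\qquad E_n=\Delta_nD_n. \]
As $E_n=\mathop{\rm diag}_{i=1,\ldots,n}\bigl((-1)^{i+1}\lambda_i(T_n)\bigr)$ is diagonal and $V_n$ is orthogonal, this is a spectral decomposition of $H_n$, so $H_n\vv_i=(-1)^{i+1}\lambda_i(T_n)\vv_i$, that is, $\lambda_i(H_n)=(-1)^{i+1}\lambda_i(T_n)$ for $i=1,\ldots,n$.

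Everything above is routine once item~1 is in place; the only point requiring care is the exactness of the dimension count in item~1, namely that the symmetric eigenvectors of $T_n$ can be chosen to span all of $\mathbb V_n^+$ rather than just a subspace of it. This is exactly where the commutation $Y_nT_n=T_nY_n$ is used: expanding any $\vv\in\mathbb V_n^+$ along the eigenprojections of $T_n$ and invoking the $Y_n$-invariance of each $T_n$-eigenspace forces every component of $\vv$ to lie again in $\mathbb V_n^+$, whence $\mathbb V_n^+$ is the orthogonal direct sum of the subspaces $\ker(T_n-\mu I_n)\cap\mathbb V_n^+$ taken over the eigenvalues $\mu$ of $T_n$; consequently an orthonormal eigenbasis of $T_n$ assembled eigenspace by eigenspace really does contain $\dim\mathbb V_n^+=\lceil n/2\rceil$ symmetric vectors, and likewise $\lfloor n/2\rfloor$ skew-symmetric ones.
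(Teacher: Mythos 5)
Your proof is correct and takes essentially the same route as the paper: the paper disposes of item~1 by citing the Cantoni--Butler theorem \cite[Theorem~2]{SC-matrix} and of item~2 by the same observation that $H_n=Y_nT_n$ combined with $Y_nV_n=V_n\Delta_n$ and $T_n=V_nD_nV_n^*$. The only difference is that you spell out the Cantoni--Butler argument (centrosymmetry gives $Y_nT_n=T_nY_n$, hence $T_n$-invariance of $\mathbb V_n^+$ and $\mathbb V_n^-$ and simultaneous orthogonal diagonalization with the correct dimension count), which the paper leaves to the reference.
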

\begin{proof}
Since $T_n$ is symmetric centrosymmetric, the first property follows from \cite[Theorem~2]{SC-matrix}. The second property is a consequence of the first property and the definition $H_n=Y_nT_n$.
\end{proof}

Taking into account that the matrices $T_n(f)$ are real and symmetric whenever $f$ is real and even, the following result is a corollary of Lemma~\ref{cbt} and Theorem~\ref{Toep-th}.

\begin{corollary}\label{cb}
Let $f\in L^1([-\pi,\pi])$ be real and even. Then, for every $n$, there exist a real unitary matrix $V_n$ and an ordering of the eigenvalues of $T_n(f)$ and $H_n(f)$ such that
\begin{alignat*}{3}
Y_n&=V_n\Delta_nV_n^*, &\qquad\Delta_n&=\mathop{\rm diag}_{i=1,\ldots,n}(-1)^{i+1},\\
T_n(f)&=V_nD_nV_n^*, &\qquad D_n&=\mathop{\rm diag}_{i=1,\ldots,n}\lambda_i(T_n(f)),\\
H_n(f)&=V_nE_nV_n^*, &\qquad E_n&=\Delta_nD_n=\mathop{\rm diag}_{i=1,\ldots,n}\lambda_i(H_n(f)),\\
\lambda_i(H_n(f))&=(-1)^{i+1}\lambda_i(T_n(f)), &\qquad i&=1,\ldots,n.
\end{alignat*}
In particular, if $f$ is not a.e.\ constant, the eigenvalues of $H_n(f)$ lie in $(m_f,M_f)\cup(-M_f,-m_f)$, where
\[ m_f=\mathop{\rm ess\,inf}_{[-\pi,\pi]}f,\qquad M_f=\mathop{\rm ess\,sup}_{[-\pi,\pi]}f. \]
\end{corollary}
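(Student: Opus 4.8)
The plan is to derive Corollary~\ref{cb} as a direct consequence of the machinery already assembled, with essentially no new computation. First I would recall that since $f$ is real and even, its Fourier coefficients $f_k$ are real and satisfy $f_{-k}=f_k$ (this is the content of the forthcoming Lemma~\ref{feven}, which we may cite); hence $T_n(f)$ is a real symmetric Toeplitz matrix for every $n$, and in particular it is centrosymmetric. This puts us squarely in the hypotheses of Lemma~\ref{cbt} with $T_n=T_n(f)$ and $H_n=H_n(f)=Y_nT_n(f)$.

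Next I would invoke part~1 of Lemma~\ref{cbt} to obtain an orthonormal basis $\{\vv_1,\ldots,\vv_n\}$ of $\mathbb R^n$ consisting of eigenvectors of $T_n(f)$, of which $\lceil n/2\rceil$ are symmetric and $\lfloor n/2\rfloor$ are skew-symmetric. By relabeling the eigenvectors (and simultaneously the associated eigenvalues of $T_n(f)$) so that the symmetric ones occupy the odd indices and the skew-symmetric ones the even indices, the matrix $V_n=[\vv_1\,|\,\cdots\,|\,\vv_n]$ satisfies the two bullet conditions in part~2 of Lemma~\ref{cbt}: namely $Y_n=V_n\Delta_nV_n^*$ with $\Delta_n=\mathop{\rm diag}_{i}(-1)^{i+1}$, and $T_n(f)=V_nD_nV_n^*$ with $D_n=\mathop{\rm diag}_i\lambda_i(T_n(f))$ — here $V_n$ is real and orthogonal, so $V_n^*=V_n^{-1}=V_n^{\mathsf T}$, i.e.\ $V_n$ is a real unitary matrix. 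Part~2 of Lemma~\ref{cbt} then immediately yields $H_n(f)=Y_nT_n(f)=V_n\Delta_nD_nV_n^*=V_nE_nV_n^*$ with $E_n=\Delta_nD_n$, and consequently $\lambda_i(H_n(f))=(-1)^{i+1}\lambda_i(T_n(f))$ for $i=1,\ldots,n$, with this very ordering of the eigenvalues of $H_n(f)$. This establishes the displayed simultaneous-diagonalization identities.

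For the final localization statement, suppose $f$ is not a.e.\ constant. Then $f\in L^1$ is real, so Theorem~\ref{Toep-th}(2) applies: $m_f<M_f$ and every eigenvalue $\lambda_i(T_n(f))$ lies in the open interval $(m_f,M_f)$. Since $\lambda_i(H_n(f))=\pm\lambda_i(T_n(f))$ depending on the parity of $i$, each eigenvalue of $H_n(f)$ lies either in $(m_f,M_f)$ or in $(-M_f,-m_f)$; hence all eigenvalues of $H_n(f)$ lie in $(m_f,M_f)\cup(-M_f,-m_f)$, as claimed.

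There is no real obstacle here — the corollary is assembled by specializing Lemma~\ref{cbt} (which encodes the Cantoni--Butler structure) to the Toeplitz setting and combining it with the eigenvalue containment from Theorem~\ref{Toep-th}. The only point requiring a word of care is the reordering of eigenvectors/eigenvalues so that the symmetric/skew-symmetric pattern matches the required odd/even index pattern of $\Delta_n$, and the observation that a real orthonormal eigenbasis makes $V_n$ real unitary; both are routine. If one wants to be fully self-contained about why $T_n(f)$ is real symmetric, one cites the even-Fourier-coefficient fact (Lemma~\ref{feven}) noted just before Theorem~\ref{Toep-th}.
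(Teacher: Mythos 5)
Your proposal is correct and takes exactly the route the paper intends: the paper states Corollary~\ref{cb} as an immediate consequence of Lemma~\ref{cbt} (applicable because $f$ real and even makes $T_n(f)$ real symmetric Toeplitz) combined with Theorem~\ref{Toep-th}(2) for the localization, which is precisely your argument spelled out in full detail. The only difference is that you make explicit the relabeling of the symmetric/skew-symmetric eigenvectors to match the odd/even index pattern of $\Delta_n$, a step the paper leaves implicit.
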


\section{Main results: spectral properties of flipped Toeplitz matrices}\label{main}

In this section, we state and prove the main results of this paper. Theorem~\ref{cb+-} is our first main result.
Throughout this paper, if $\Lambda$ is a multiset of numbers and $\alpha\in\mathbb C$, we denote by $\Lambda+\alpha$ and $\Lambda-\alpha$ the multisets $\{\lambda+\alpha:\lambda\in\Lambda\}$ and $\{\lambda-\alpha:\lambda\in\Lambda\}$, respectively.

\begin{theorem}\label{cb+-}
Let $f\in L^1([-\pi,\pi])$ be real and even, and suppose that $m_f=\mathop{\rm ess\,inf}_{[0,\pi]}f>-\infty$ or $M_f=\mathop{\rm ess\,sup}_{[0,\pi]}f<\infty$. Then, for every $n$, there exist a real unitary matrix $V_n$ and an ordering of the eigenvalues of $T_n(f)$ and $H_n(f)$ such that
\begin{alignat}{3}
Y_n&=V_n\Delta_nV_n^*, &\qquad\Delta_n&=\mathop{\rm diag}_{i=1,\ldots,n}(-1)^{i+1},\label{(i)}\\
T_n(f)&=V_nD_nV_n^*, &\qquad D_n&=\mathop{\rm diag}_{i=1,\ldots,n}\lambda_i(T_n(f)),\label{(ii)}\\
H_n(f)&=V_nE_nV_n^*, &\qquad E_n&=\Delta_nD_n=\mathop{\rm diag}_{i=1,\ldots,n}\lambda_i(H_n(f)),\label{(iii)}\\
\lambda_i(H_n(f))&=(-1)^{i+1}\lambda_i(T_n(f)), &\qquad i&=1,\ldots,n,\label{(iv)}\\
\{\Lambda_n^+\}_n&\sim f,&\qquad\Lambda_n^+&=\{\lambda_{2i-1}(H_n(f))\}_{i=1,\ldots,\lceil n/2\rceil}=\{\lambda_{2i-1}(T_n(f))\}_{i=1,\ldots,\lceil n/2\rceil},\label{l+}\\
\{\Lambda_n^-\}_n&\sim-f,&\qquad\Lambda_n^-&=\{\lambda_{2i}(H_n(f))\}_{i=1,\ldots,\lfloor n/2\rfloor}=\{-\lambda_{2i}(T_n(f))\}_{i=1,\ldots,\lfloor n/2\rfloor}.\label{l-}
\end{alignat}
\end{theorem}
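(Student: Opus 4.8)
\emph{Proof strategy.} Relations \eqref{(i)}--\eqref{(iv)} hold for \emph{every} real and even $f\in L^1([-\pi,\pi])$ — they are exactly the content of Corollary~\ref{cb} — so the only new assertions are the distribution statements \eqref{l+}--\eqref{l-}, for the ordering supplied by Corollary~\ref{cb}. In that ordering the eigenvectors $\vv_{2i-1}$ form a basis of $\mathbb V_n^+$ and the $\vv_{2i}$ a basis of $\mathbb V_n^-$; since $\mathbb V_n^+$ and $\mathbb V_n^-$ are invariant under the centrosymmetric matrix $T_n(f)$, the multiset $\Lambda_n^+=\{\lambda_{2i-1}(T_n(f))\}_i$ is exactly the spectrum $A_n$ of the compression of $T_n(f)$ to $\mathbb V_n^+$, and $\Lambda_n^-=\{-\lambda_{2i}(T_n(f))\}_i=-B_n$, where $B_n$ is the spectrum of the compression of $T_n(f)$ to $\mathbb V_n^-$. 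Hence \eqref{l+}--\eqref{l-} are equivalent to $\{A_n\}_n\sim f$ and $\{B_n\}_n\sim f$. The case $f$ a.e.\ constant is immediate, so assume henceforth $f$ not a.e.\ constant.

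The asymptotic distribution, the subspaces $\mathbb V_n^\pm$, and the compressions above all transform transparently under $f\mapsto-f$ (which turns $A_n,B_n$ into $-A_n,-B_n$) and under $f\mapsto f-c$ (which turns them into $A_n-c,B_n-c$); using this together with the one-sided boundedness hypothesis I would reduce to the \emph{normalized} case $f\le0$ a.e.\ — subtract $M_f$ when $M_f<\infty$, and first replace $f$ by $-f$ when $M_f=\infty$ (so that $m_f>-\infty$) and then subtract. In the normalized case $T_n(f)\preceq0$, so by Theorem~\ref{Toep-th}(2) all eigenvalues of $T_n(f)$ are strictly negative; consequently $A_n,B_n\subset(-\infty,0)$, $-B_n\subset(0,\infty)$, and $A_n$ (respectively $-B_n$) is precisely the set of negative (respectively positive) eigenvalues of the nonsingular matrix $H_n(f)$.

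Now Theorems~\ref{r-hankel} and~\ref{Toep-th} do the work. Testing $\{H_n(f)\}_n\sim_\lambda\mathrm{diag}(f,-f)$ against the reflection $t\mapsto G(-t)$ of a function $G\in C_c(\mathbb R)$ with $\operatorname{supp}G\subset(-\infty,0)$ — which is supported in $(0,\infty)$ and hence vanishes on $A_n$ — one reads off $\tfrac1n\sum_{\mu\in B_n}G(\mu)\to\tfrac1{4\pi}\int_{-\pi}^\pi G(f)$; subtracting this from Theorem~\ref{Toep-th}(3) tested against $G$ gives the same limit for $\tfrac1n\sum_{\lambda\in A_n}G(\lambda)$. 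To pass from test functions supported away from $0$ to an arbitrary $F\in C_c(\mathbb R)$, I would use the uniform tightness estimate $\tfrac1n\#\{\lambda\in A_n:\lambda<-R\}\le|f_0|/R$ — which follows from $\sum_i|\lambda_i(T_n(f))|=-\operatorname{tr}T_n(f)=n|f_0|$ — together with $|A_n|=\lceil n/2\rceil\sim n/2$: writing $F=F\chi+F(1-\chi)$ with $\chi$ continuous, equal to $1$ on $(-\infty,-\delta]$ and to $0$ near $0$, the term $F\chi$ is covered by the previous step, and a short computation (let $n\to\infty$, then $\delta\to0$) shows that the mass of $A_n$ escaping to $0$ is forced, by $|A_n|\sim n/2$ and $f\le0$, to coincide with the mass the limiting measure puts at $0$. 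This yields $\tfrac1n\sum_{\lambda\in A_n}F(\lambda)\to\tfrac1{4\pi}\int_{-\pi}^\pi F(f)$ for all $F\in C_c(\mathbb R)$, i.e.\ $\{A_n\}_n\sim f$ after dividing by $|A_n|/n\to\tfrac12$, and then $\{B_n\}_n\sim f$ by the same subtraction from Theorem~\ref{Toep-th}(3); undoing the normalization gives \eqref{l+}--\eqref{l-}. The main difficulty is precisely this last passage: when $M_f=0$ the spectrum of $H_n(f)$ accumulates at $0$ from both sides, so the splitting of that spectrum by sign must be made quantitative — and it is exactly for the reduction to $f\le0$, hence for the semidefiniteness and the tightness bound it supplies, that the one-sided boundedness assumption is needed.
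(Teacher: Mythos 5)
Your proposal is correct, and its skeleton is the same as the paper's: obtain \eqref{(i)}--\eqref{(iv)} from Corollary~\ref{cb}, observe that $\Lambda_n^+$ and $-\Lambda_n^-$ are the (basis-independent) spectra of the compressions of $T_n(f)$ to $\mathbb V_n^+$ and $\mathbb V_n^-$, use the one-sided boundedness to shift $f$ so that the sign of an eigenvalue of $H_n(f)$ tells you which compression it came from, and then separate the two halves of Theorem~\ref{r-hankel} by test functions. The one substantive difference is the choice of shift, and it costs you real work. The paper replaces $f$ by $g=f+\alpha$ with $m_f+\alpha>0$, so that $\mathcal{ER}(g)\subseteq[m_f+\alpha,\infty)$ and, by Theorem~\ref{Toep-th}, the spectrum of $H_n(g)$ has a gap $(-(m_f+\alpha),m_f+\alpha)$ around the origin; a test function $\tilde F$ equal to $F$ on $[m_f+\alpha,\infty)$ and to $0$ on $(-\infty,-(m_f+\alpha)]$ then isolates $\tilde\Lambda_n^+$ in one line, with no accumulation at $0$ to worry about. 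You instead normalize to $\mathop{\rm ess\,sup}f=0$, which places the accumulation point of both $\Lambda_n^+$ and $\Lambda_n^-$ exactly at the origin and forces the entire second half of your argument (the cutoff $\chi$, the trace/Markov tightness bound, the ``$n\to\infty$ then $\delta\to0$'' counting argument matching the escaping mass to $\mu_1\{f=0\}$). That analysis is sound as sketched --- the counting limit $\frac1n\#\{\lambda\in\Lambda_n^+:\lambda\le-\delta\}\to\frac1{4\pi}\mu_1\{f\le-\delta\}$ at continuity points $\delta$, combined with $|\Lambda_n^+|=\lceil n/2\rceil$, does pin down the mass at $0$ --- but it is entirely avoidable: shifting by $M_f+1$ instead of $M_f$ buys the spectral gap for free. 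A second, minor difference: you recover $\{\Lambda_n^+\}_n\sim f$ by subtracting the $\Lambda_n^-$ contribution from item~3 of Theorem~\ref{Toep-th}, whereas the paper reads both halves directly off Theorem~\ref{r-hankel} with the two complementary cutoffs; either works.
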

\begin{proof}
We prove the theorem in the case where $m_f>-\infty$ (the proof in the case where $M_f<\infty$ is similar).
Let $g=f+\alpha$, where $\alpha\in\mathbb R$ is a constant such that $m_f+\alpha>0$. Note that $g$ is a real even function in $L^1([-\pi,\pi])$ like~$f$, and moreover $g\ge m_f+\alpha>0$ a.e.\ in $[0,\pi]$, so $\mathcal{ER}(g)$ is a closed subset of $(0,\infty)$.
By Corollary~\ref{cb}, for every~$n$, there exist a real unitary matrix $V_n$ and an ordering of the eigenvalues of $T_n(g)$ and $H_n(g)$ such that 
\begin{alignat}{3}
Y_n&=V_n\Delta_nV_n^*, &\qquad\Delta_n&=\mathop{\rm diag}_{i=1,\ldots,n}(-1)^{i+1},\label{(i)g}\\
T_n(g)&=V_nD_n'V_n^*, &\qquad D_n'&=\mathop{\rm diag}_{i=1,\ldots,n}\lambda_i(T_n(g)),\label{(ii)g}\\
H_n(g)&=V_nE_n'V_n^*, &\qquad E_n'&=\Delta_nD_n'=\mathop{\rm diag}_{i=1,\ldots,n}\lambda_i(H_n(g)),\label{(iii)g}\\
\lambda_i(H_n(g))&=(-1)^{i+1}\lambda_i(T_n(g)), &\qquad i&=1,\ldots,n.\label{(iv)g}
\end{alignat}
Let
\begin{align*}
\tilde\Lambda_n^+&=\{\lambda_{2i-1}(H_n(g))\}_{i=1,\ldots,\lceil n/2\rceil}=\{\lambda_{2i-1}(T_n(g))\}_{i=1,\ldots,\lceil n/2\rceil},\\
\tilde\Lambda_n^-&=\{\lambda_{2i}(H_n(g))\}_{i=1,\ldots,\lfloor n/2\rfloor}=\{-\lambda_{2i}(T_n(g))\}_{i=1,\ldots,\lfloor n/2\rfloor}.
\end{align*}
We prove that $\{\tilde\Lambda_n^+\}_n\sim g$. For every $F\in C_c(\mathbb C)$, let $\tilde F\in C_c(\mathbb C)$ be a function such that $\tilde F=F$ on $[m_f+\alpha,\infty)$ and $\tilde F=0$ on $(-\infty,-m_f-\alpha]$. Note that $\tilde\Lambda_n^+\subseteq[m_f+\alpha,\infty)$ and $\tilde\Lambda_n^-\subseteq(-\infty,-m_f-\alpha]$ by Theorem~\ref{Toep-th}.
By Theorem~\ref{r-hankel} and the evenness of $g$,
\begin{align*}
\lim_{n\to\infty}\frac1{\lceil n/2\rceil}\sum_{i=1}^{\lceil n/2\rceil}F(\lambda_{2i-1}(H_n(g)))&=\lim_{n\to\infty}\frac1{\lceil n/2\rceil}\sum_{i=1}^{\lceil n/2\rceil}\tilde F(\lambda_{2i-1}(H_n(g)))=\lim_{n\to\infty}\frac n{\lceil n/2\rceil}\cdot\frac1n\sum_{i=1}^{n}\tilde F(\lambda_i(H_n(g)))\\
&=2\cdot\frac1{\pi}\int_0^\pi\frac{\tilde F(g(x))+\tilde F(-g(x))}2{\rm d}x=\frac1{\pi}\int_0^\pi\tilde F(g(x)){\rm d}x=\frac1{\pi}\int_0^\pi F(g(x)){\rm d}x.
\end{align*}
Hence, $\{\tilde\Lambda_n^+\}_n\sim g$. Similarly, one can show that $\{\tilde\Lambda_n^-\}_n\sim-g$.

To conclude the proof, we note that, by \eqref{(i)g}--\eqref{(ii)g} and the linearity of the operator $T_n(\cdot)$,
\begin{align}
T_n(f)&=T_n(g-\alpha)=T_n(g)-\alpha I_n=V_n(D_n'-\alpha I_n)V_n^*,\label{iuppif0}\\
H_n(f)&=Y_nT_n(f)=V_n\Delta_n(D_n'-\alpha I_n)V_n^*.\label{iuppi'f0} 
\end{align}
Define the following ordering for the eigenvalues of $T_n(f)$ and $H_n(f)$:
\begin{align}
\lambda_i(T_n(f))&=\lambda_i(T_n(g))-\alpha,\qquad i=1,\ldots,n,\label{iuperf0}\\
\lambda_i(H_n(f))&=(-1)^{i+1}(\lambda_i(T_n(g))-\alpha)=(-1)^{i+1}\lambda_i(T_n(f)),\label{iuper'f0} 
\qquad i=1,\ldots,n.
\end{align}
In view of \eqref{iuppif0}--\eqref{iuppi'f0}, it is now easy to check that \eqref{(i)}--\eqref{(iv)} are satisfied with
\begin{align*}
D_n&=D_n'-\alpha I_n,\\
E_n&=\Delta_n(D_n'-\alpha I_n)=\Delta_nD_n.
\end{align*}
Moreover, also \eqref{l+}--\eqref{l-} are satisfied, because $\{\tilde\Lambda_n^+\}_n\sim g$ is equivalent to $\{\Lambda_n^+\}_n\sim f$ and $\{\tilde\Lambda_n^-\}_n\sim-g$ is equivalent to $\{\Lambda_n^-\}_n\sim-f$. These equivalences follow from Definition~\ref{adn}, the equation $g=f+\alpha$, and the observation that
\begin{align*}
\tilde\Lambda_n^+&=\{\lambda_{2i-1}(T_n(g))\}_{i=1,\ldots,\lceil n/2\rceil}=\{\lambda_{2i-1}(T_n(f))+\alpha\}_{i=1,\ldots,\lceil n/2\rceil}=\Lambda_n^++\alpha,\\
\tilde\Lambda_n^-&=\{-\lambda_{2i}(T_n(g))\}_{i=1,\ldots,\lceil n/2\rceil}=\{-\lambda_{2i}(T_n(f))-\alpha\}_{i=1,\ldots,\lceil n/2\rceil}=\Lambda_n^--\alpha. \tag*{\qedhere}
\end{align*}
\end{proof}

Theorem~\ref{thm3-Giov-vts} is our second main result.

\begin{theorem}\label{thm3-Giov-vts}
Let $f:[-\pi,\pi]\to\mathbb R$ be even, bounded and continuous a.e.\ with $\mathcal{ER}(f)=[\inf_{[0,\pi]}f,\sup_{[0,\pi]}f]$. Then, for every $n$ and every a.u.\ grid $\{x_{i,n}\}_{i=1,\ldots,n}$ in $[0,\pi]$, there exist a real unitary matrix $V_n$ and an ordering of the eigenvalues of $T_n(f)$ and $H_n(f)$ such that
\begin{alignat}{3}
Y_n&=V_n\Delta_nV_n^*,&\qquad\Delta_n&=\mathop{\rm diag}_{i=1,\ldots,n}(-1)^{i+1},\label{a-f}\\
T_n(f)&=V_nD_nV_n^*,&\qquad D_n&=\mathop{\rm diag}_{i=1,\ldots,n}\lambda_i(T_n(f)),\label{a--f}\\
H_n(f)&=V_nE_nV_n^*,&\qquad E_n&=\Delta_nD_n=\mathop{\rm diag}_{i=1,\ldots,n}\lambda_i(H_n(f)),\label{a---f}\\
\lambda_i(H_n(f))&=(-1)^{i+1}\lambda_i(T_n(f)), &\qquad i&=1,\ldots,n,\label{ef}\\
\max_{i=1,\ldots,n}&|f(x_{i,n})-\lambda_i(T_n(f))|\to0\ \,\mbox{as}\,\ n\to\infty.\label{af}
\end{alignat}
\end{theorem}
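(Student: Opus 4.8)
The displays \eqref{a-f}--\eqref{ef} are of exactly the same shape as those in Corollary~\ref{cb}; the genuinely new content is that a \emph{single} real unitary $V_n$ and a \emph{single} ordering of the eigenvalues can be chosen so as to realize simultaneously the Cantoni--Butler structure of Lemma~\ref{cbt} \emph{and} the grid matching \eqref{af}. The plan is to split everything along the symmetric/skew-symmetric decomposition. Since $f$ is bounded it lies in $L^1([-\pi,\pi])$, and being real and even it makes $T_n(f)$ real symmetric; so I would first invoke Theorem~\ref{cb+-} --- its hypotheses hold because $f$ is bounded --- to split the spectrum of $T_n(f)$ into the multiset $S_n=\Lambda_n^+$ of $\lceil n/2\rceil$ eigenvalues carried by symmetric eigenvectors and the multiset $A_n$ of $\lfloor n/2\rfloor$ eigenvalues carried by skew-symmetric ones (so $\Lambda_n^-=-A_n$). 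By \eqref{l+}--\eqref{l-} and the evenness of $f$, both $\{S_n\}_n$ and $\{A_n\}_n$ then have asymptotic distribution $\hat f:=f|_{[0,\pi]}$ on $[0,\pi]$, and by Theorem~\ref{Toep-th} both are contained in $[m_f,M_f]=\mathcal{ER}(\hat f)=[\inf_{[0,\pi]}\hat f,\sup_{[0,\pi]}\hat f]$; moreover $\hat f$ is bounded and continuous a.e.

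Next I would check that, for any a.u. grid $\{x_{i,n}\}_{i=1}^n$ in $[0,\pi]$, its odd-indexed subgrid $\{x_{2i-1,n}\}_{i=1}^{\lceil n/2\rceil}$ and its even-indexed subgrid $\{x_{2i,n}\}_{i=1}^{\lfloor n/2\rfloor}$ are again a.u. in $[0,\pi]$. This is a short estimate: $\bigl|x_{2i-1,n}-\tfrac{i\pi}{\lceil n/2\rceil+1}\bigr|$ is bounded by the uniformity measure of $\{x_{k,n}\}_{k=1}^n$ plus $\bigl|\tfrac{(2i-1)\pi}{n+1}-\tfrac{i\pi}{\lceil n/2\rceil+1}\bigr|$, and the latter quantity is $O(1/n)$ uniformly in $i$ (one checks $n$ even and $n$ odd separately); the even subgrid is handled in the same way.

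Then I would apply Theorem~\ref{eccoGi-n} to $\{S_n\}_n$, to $\hat f$, and to the a.u. subgrid $\{x_{2i-1,n}\}_i$ --- all its hypotheses were verified above, with $\epsilon_n\equiv0$ --- to obtain an enumeration of $S_n$ which, placed at the odd positions, defines eigenvalues $\lambda_{2i-1}(T_n(f))$ satisfying $\max_i|f(x_{2i-1,n})-\lambda_{2i-1}(T_n(f))|\to0$; applying it likewise to $\{A_n\}_n$ and $\{x_{2i,n}\}_i$ defines $\lambda_{2i}(T_n(f))$, and concatenating the two estimates yields \eqref{af}. Since the odd positions now carry exactly the multiset $S_n$ and the even positions exactly $A_n$, and since within each eigenspace of $T_n(f)$ the symmetric and the skew-symmetric eigenvectors may be chosen freely (eigenspaces for distinct eigenvalues being mutually orthogonal, and $\mathbb V_n^+\perp\mathbb V_n^-$), I can build an orthonormal eigenbasis $\vv_1,\dots,\vv_n$ of $T_n(f)$ with $\vv_{2i-1}$ symmetric, $\vv_{2i}$ skew-symmetric, and $T_n(f)\vv_i=\lambda_i(T_n(f))\vv_i$. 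Then $V_n=[\vv_1|\cdots|\vv_n]$ is real unitary and its two hypotheses for Lemma~\ref{cbt}(2) --- namely $Y_n=V_n\Delta_nV_n^*$ (the columns alternate symmetric/skew) and $T_n(f)=V_nD_nV_n^*$ (by construction) --- hold, so Lemma~\ref{cbt}(2) delivers \eqref{a-f}--\eqref{ef}, completing the proof.

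The step I expect to be the crux is this compatibility: the ordering produced by Theorem~\ref{eccoGi-n} must be one that a Cantoni--Butler eigenbasis can support, i.e.\ with symmetric eigenvectors at odd positions and skew-symmetric ones at even positions. Running Theorem~\ref{eccoGi-n} once on the whole spectrum against the whole grid would not guarantee this, which is exactly why the plan runs it separately on the two structurally distinguished halves $S_n$ and $A_n$ against the two half-grids --- the price being the (elementary) verification that the half-grids stay asymptotically uniform and the bookkeeping, via Theorem~\ref{cb+-}, that $S_n$ and $A_n$ each inherit the symbol $\hat f$.
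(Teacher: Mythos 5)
Your proposal is correct and follows essentially the same route as the paper: invoke Theorem~\ref{cb+-} to split the spectrum into the symmetric/skew-symmetric halves with symbols $f$ and $-f$, apply Theorem~\ref{eccoGi-n} separately to each half against the odd- and even-indexed subgrids (which are a.u.), and interleave the results via a permutation that maps odd indices to odd and even to even, so that the Cantoni--Butler structure \eqref{a-f}--\eqref{ef} is preserved. The only cosmetic differences are that you work with $A_n\sim f$ rather than $\Lambda_n^-\sim-f$ and rebuild $V_n$ explicitly via Lemma~\ref{cbt}(2) instead of tracking the column permutation of the $V_n$ supplied by Theorem~\ref{cb+-}; both are sound.
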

\begin{proof}
By Theorem~\ref{cb+-}, for every $n$ there exist a real unitary matrix $V_n$ and an ordering of the eigenvalues of $T_n(f)$ and $H_n(f)$ such that \eqref{a-f}--\eqref{ef} are satisfied and
\begin{alignat}{3}
\{\Lambda_n^+\}_n&\sim f,&\qquad\Lambda_n^+&=\{\lambda_{2i-1}(H_n(f))\}_{i=1,\ldots,\lceil n/2\rceil}=\{\lambda_{2i-1}(T_n(f))\}_{i=1,\ldots,\lceil n/2\rceil},\label{l+f}\\
\{\Lambda_n^-\}_n&\sim-f,&\qquad\Lambda_n^-&=\{\lambda_{2i}(H_n(f))\}_{i=1,\ldots,\lfloor n/2\rfloor}=\{-\lambda_{2i}(T_n(f))\}_{i=1,\ldots,\lfloor n/2\rfloor}.\label{l-f}
\end{alignat}
Note that if we permute the columns of $V_n$ and the eigenvalues of $T_n(f)$ and $H_n(f)$ through a same permutation $\tau$ of $\{1,\ldots,n\}$ such that $\tau$ maps odd indices to odd indices and even indices to even indices, then \eqref{a-f}--\eqref{ef} 
continue to hold.

By \eqref{l+f}--\eqref{l-f}, Theorem~\ref{Toep-th}, and the assumptions on $f$, the hypotheses of Theorem~\ref{eccoGi-n} are satisfied for $f$ and $\Lambda_n^+$ as well as for $-f$ and $\Lambda_n^-$.
Hence, by Theorem~\ref{eccoGi-n} applied first with $f$ and $\Lambda_n^+$ and then with $-f$ and $\Lambda_n^-$, we infer that, for every pair of a.u.\ grids $\{x^+_{i,n}\}_{i=1,\ldots,\lceil n/2\rceil}$, $\{x^-_{i,n}\}_{i=1,\ldots,\lfloor n/2\rfloor}$ in $[0,\pi]$, we have
\begin{alignat}{3}
\min_\tau\max_{i=1,\ldots,\lceil n/2\rceil}|f(x_{i,n}^+)-\lambda_{2\tau(i)-1}(T_n(f))|\to0\ \,\mbox{as}\, \ n\to\infty,\label{sub1}\\
\min_\tau\max_{i=1,\ldots,\lfloor n/2\rfloor}|-f(x_{i,n}^-)+\lambda_{2\tau(i)}(T_n(f))|\to0\ \,\mbox{as}\, \ n\to\infty,\label{sub2}
\end{alignat}
where the minima are taken over all permutations $\tau$ of $\{1,\ldots,\lceil n/2\rceil\}$ and $\{1,\ldots,\lfloor n/2\rfloor\}$, respectively.

Now let $\{x_{i,n}\}_{i=1,\ldots,n}$ be an a.u.\ grid in $[0,\pi]$. The two subgrids $\{x_{2i-1,n}\}_{i=1,\ldots,\lceil n/2\rceil}$, $\{x_{2i,n}\}_{i=1,\ldots,\lfloor n/2\rfloor}$ are a.u.\ in $[0,\pi]$. We can therefore use these subgrids in \eqref{sub1}--\eqref{sub2} and we obtain
\begin{alignat}{3}
\min_\tau\max_{i=1,\ldots,\lceil n/2\rceil}|f(x_{2i-1,n})-\lambda_{2\tau(i)-1}(T_n(f))|\to0\ \,\mbox{as}\, \ n\to\infty,\label{sub1'}\\
\min_\tau\max_{i=1,\ldots,\lfloor n/2\rfloor}|-f(x_{2i,n})+\lambda_{2\tau(i)}(T_n(f))|\to0\ \,\mbox{as}\, \ n\to\infty.\label{sub2'}
\end{alignat}
For every $n$, we rearrange the eigenvalues of $H_n(f)$ and $T_n(f)$ as follows:
\begin{align*}
\{\lambda_1(H_n(f)),\ldots,\lambda_n(H_n(f))\}&=\{\lambda_{2\tau_n^+(1)-1}(H_n(f)),\lambda_{2\tau_n^-(1)}(H_n(f)),\lambda_{2\tau_n^+(2)-1}(H_n(f)),\lambda_{2\tau_n^-(2)}(H_n(f)),\ldots\},\\
\{\lambda_1(T_n(f)),\ldots,\lambda_n(T_n(f))\}&=\{\lambda_{2\tau_n^+(1)-1}(T_n(f)),\lambda_{2\tau_n^-(1)}(T_n(f)),\lambda_{2\tau_n^+(2)-1}(T_n(f)),\lambda_{2\tau_n^-(2)}(T_n(f)),\ldots\},
\end{align*}
where $\tau_n^+$ and $\tau_n^-$ are two permutations for which the minima in \eqref{sub1'}--\eqref{sub2'} are attained. Then, \eqref{sub1'}--\eqref{sub2'} imply
\begin{equation*}
\max_{i=1,\ldots,n}|f(x_{i,n})-\lambda_i(T_n(f))|\to0\ \,\mbox{as}\,\ n\to\infty,
\end{equation*}
which yields \eqref{af}.
Moreover, after the above rearrangement of the eigenvalues of $T_n(f)$ and $H_n(f)$,
\eqref{a-f}--\eqref{ef} continue to hold, because, by construction, the considered rearrangement is associated with a permutation $\tau_n$ of the columns of $V_n$ and the eigenvalues of $T_n(f)$ and $H_n(f)$ such that $\tau_n$ maps odd indices to odd indices and even indices to even indices. Of course, 
\eqref{a-f}--\eqref{ef} continue to hold with a new matrix $V_n$ obtained by permuting the columns of the old $V_n$ through the permutation $\tau_n$. With abuse of notation, we denote again by $V_n$ the new matrix~$V_n$, so that 
\eqref{a-f}--\eqref{ef} hold unchanged. The thesis is proved.
\end{proof}

Theorem~\ref{0notin} is our third main result.

\begin{theorem}\label{0notin}
Let $f:[-\pi,\pi]\to\mathbb R$ be even and bounded with a finite number of local maximum points, local minimum points, and discontinuity points, and with $\mathcal{ER}(f)=[\inf_{[0,\pi]}f,\sup_{[0,\pi]}f]$ and $(\inf_{[0,\pi]}f,\sup_{[0,\pi]}f)\subseteq f([0,\pi])$.
Then, for every $n$, there exist an a.u.\ grid $\{x_{i,n}\}_{i=1,\ldots,n}$ in $[0,\pi]$, a real unitary matrix $V_n$, and an ordering of the eigenvalues of $T_n(f)$ and $H_n(f)$ such that
\begin{alignat}{3}
Y_n&=V_n\Delta_nV_n^*,&\qquad\Delta_n&=\mathop{\rm diag}_{i=1,\ldots,n}(-1)^{i+1},\label{a-}\\
T_n(f)&=V_nD_nV_n^*,&\qquad D_n&=\mathop{\rm diag}_{i=1,\ldots,n}\lambda_i(T_n(f)),\label{a--}\\
H_n(f)&=V_nE_nV_n^*,&\qquad E_n&=\Delta_nD_n=\mathop{\rm diag}_{i=1,\ldots,n}\lambda_i(H_n(f)),\label{a---}\\
\lambda_i(H_n(f))&=(-1)^{i+1}\lambda_i(T_n(f)), &\qquad i&=1,\ldots,n,\label{e}\\
\lambda_i(T_n(f))&=f(x_{i,n}), &\qquad i&=1,\ldots,n.\label{a}
\end{alignat}
\end{theorem}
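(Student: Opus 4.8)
The plan is to follow the argument of Theorem~\ref{thm3-Giov-vts} almost verbatim, with Theorem~\ref{thm:mmd} taking over the role played there by Theorem~\ref{eccoGi-n}. The one structural difference is that the a.u.\ grid is now existentially quantified rather than prescribed, so instead of splitting a given grid of $n$ points into two a.u.\ subgrids I will build the grid of $n$ points by interleaving two a.u.\ grids of sizes $\lceil n/2\rceil$ and $\lfloor n/2\rfloor$ delivered by Theorem~\ref{thm:mmd}.

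First, since $f$ is bounded and even, $f\in L^1([-\pi,\pi])$ and both $m_f=\mathop{\rm ess\,inf}_{[0,\pi]}f$ and $M_f=\mathop{\rm ess\,sup}_{[0,\pi]}f$ are finite, so Theorem~\ref{cb+-} applies and provides, for each $n$, a real unitary $V_n$ and an ordering of the eigenvalues of $T_n(f)$ and $H_n(f)$ for which \eqref{a-}--\eqref{e} hold and, moreover, $\{\Lambda_n^+\}_n\sim f$ and $\{\Lambda_n^-\}_n\sim -f$, with $\Lambda_n^+=\{\lambda_{2i-1}(T_n(f))\}_{i=1}^{\lceil n/2\rceil}$ and $\Lambda_n^-=\{-\lambda_{2i}(T_n(f))\}_{i=1}^{\lfloor n/2\rfloor}$; by evenness of $f$ the symbols $f$ and $-f$ may be regarded as functions on $[0,\pi]$. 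Exactly as in the proof of Theorem~\ref{thm3-Giov-vts}, any later rearrangement of the eigenvalues (and of the columns of $V_n$) induced by a permutation of $\{1,\dots,n\}$ that sends odd indices to odd indices and even indices to even indices leaves \eqref{a-}--\eqref{e} untouched, and only such rearrangements will be used. If $f$ is a.e.\ constant, then the assumption $\mathcal{ER}(f)=[\inf_{[0,\pi]}f,\sup_{[0,\pi]}f]$ forces $f$ to be constant on $[-\pi,\pi]$, so $T_n(f)$ is a scalar matrix and the conclusion is immediate from Corollary~\ref{cb} with, say, the uniform grid; hence from now on $f$ is not a.e.\ constant.

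Next, I would check that Theorem~\ref{thm:mmd} applies to the pair $(f|_{[0,\pi]},\Lambda_n^+)$ and to the pair $(-f|_{[0,\pi]},\Lambda_n^-)$. Both $f$ and $-f$ are bounded on $[0,\pi]$ with finitely many local maximum, local minimum and discontinuity points, and $\mathcal{ER}(\pm f)=[\inf_{[0,\pi]}(\pm f),\sup_{[0,\pi]}(\pm f)]$; the distributional hypotheses come from the previous paragraph. The point that genuinely uses the extra assumption $(\inf_{[0,\pi]}f,\sup_{[0,\pi]}f)\subseteq f([0,\pi])$ is the containment $\Lambda_n\subseteq f([0,\pi])$ required by Theorem~\ref{thm:mmd}: since $f$ is not a.e.\ constant, Theorem~\ref{Toep-th}(2) gives $\lambda_i(T_n(f))\in(m_f,M_f)$ for all $i$, and by evenness $m_f=\inf_{[0,\pi]}f$, $M_f=\sup_{[0,\pi]}f$, whence $\Lambda_n^+\subseteq(\inf_{[0,\pi]}f,\sup_{[0,\pi]}f)\subseteq f([0,\pi])$ and $\Lambda_n^-\subseteq-\bigl(\inf_{[0,\pi]}f,\sup_{[0,\pi]}f\bigr)\subseteq(-f)([0,\pi])$. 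Theorem~\ref{thm:mmd} then yields, for each $n$, a.u.\ grids $\{x_{i,n}^+\}_{i=1}^{\lceil n/2\rceil}$ and $\{x_{i,n}^-\}_{i=1}^{\lfloor n/2\rfloor}$ in $[0,\pi]$ and permutations $\tau_n^+,\tau_n^-$ such that $\lambda_{2\tau_n^+(i)-1}(T_n(f))=f(x_{i,n}^+)$ and $\lambda_{2\tau_n^-(i)}(T_n(f))=f(x_{i,n}^-)$. I would then rearrange the eigenvalues of $T_n(f)$ and $H_n(f)$ (and the columns of $V_n$) by moving the former $(2\tau_n^+(i)-1)$st entry into the $(2i-1)$st slot and the former $(2\tau_n^-(i))$th entry into the $(2i)$th slot --- a rearrangement of the admissible odd$\to$odd, even$\to$even type --- and set $x_{2i-1,n}=x_{i,n}^+$, $x_{2i,n}=x_{i,n}^-$. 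With the new ordering, \eqref{a-}--\eqref{e} persist and $\lambda_i(T_n(f))=f(x_{i,n})$ for $i=1,\dots,n$, which is \eqref{a}.

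Finally, one must verify that the interleaved grid $\{x_{i,n}\}_{i=1}^n$ is a.u.\ in $[0,\pi]$, and this is the only place where an actual (though routine) estimate is needed. For an odd index one writes
\[ \Bigl|x_{2i-1,n}-\tfrac{(2i-1)\pi}{n+1}\Bigr|\le\Bigl|x_{i,n}^+-\tfrac{i\pi}{\lceil n/2\rceil+1}\Bigr|+\pi\Bigl|\tfrac{i}{\lceil n/2\rceil+1}-\tfrac{2i-1}{n+1}\Bigr|, \]
and similarly for an even index with $x_{i,n}^-$, $\lfloor n/2\rfloor$, $2i$ replacing $x_{i,n}^+$, $\lceil n/2\rceil$, $2i-1$; an elementary computation bounds the last term by $O(1/n)$ uniformly in $i$, so $m(\{x_{i,n}\}_i)\le\max\{m(\{x_{i,n}^+\}_i),m(\{x_{i,n}^-\}_i)\}+O(1/n)\to0$ as $n\to\infty$. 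Hence $\{x_{i,n}\}_{i=1}^n$ is a.u.\ in $[0,\pi]$ and the proof is complete. The main obstacle, such as it is, is precisely this grid bookkeeping together with the containment check above (the only place where the hypothesis $(\inf_{[0,\pi]}f,\sup_{[0,\pi]}f)\subseteq f([0,\pi])$ enters); the remainder is a direct transcription of the proof of Theorem~\ref{thm3-Giov-vts}.
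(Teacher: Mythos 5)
Your proof is correct and follows essentially the same route as the paper's own proof of Theorem~\ref{0notin}: Theorem~\ref{cb+-} supplies the joint diagonalization and the two distribution relations, Theorem~\ref{thm:mmd} is applied to $(f,\Lambda_n^+)$ and $(-f,\Lambda_n^-)$, and the two resulting a.u.\ grids are interleaved via an odd-to-odd, even-to-even rearrangement. You even make explicit two points the paper leaves implicit, namely the a.e.-constant case needed before invoking Theorem~\ref{Toep-th}(2) to get $\Lambda_n^{\pm}\subseteq(\pm f)([0,\pi])$, and the $O(1/n)$ estimate verifying that the interleaved grid is a.u.\ in $[0,\pi]$.
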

\begin{proof}
By Theorem~\ref{cb+-}, for every $n$ there exist a real unitary matrix $V_n$ and an ordering of the eigenvalues of $T_n(f)$ and $H_n(f)$ such that \eqref{a-}--\eqref{e} are satisfied and
\begin{alignat}{3}
\{\Lambda_n^+\}_n&\sim f,&\qquad\Lambda_n^+&=\{\lambda_{2i-1}(H_n(f))\}_{i=1,\ldots,\lceil n/2\rceil}=\{\lambda_{2i-1}(T_n(f))\}_{i=1,\ldots,\lceil n/2\rceil},\label{l+f+}\\
\{\Lambda_n^-\}_n&\sim-f,&\qquad\Lambda_n^-&=\{\lambda_{2i}(H_n(f))\}_{i=1,\ldots,\lfloor n/2\rfloor}=\{-\lambda_{2i}(T_n(f))\}_{i=1,\ldots,\lfloor n/2\rfloor}.\label{l-f-}
\end{alignat}
Note that if we permute the columns of $V_n$ and the eigenvalues of $T_n(f)$ and $H_n(f)$ through a same permutation $\tau$ of $\{1,\ldots,n\}$ such that $\tau$ maps odd indices to odd indices and even indices to even indices, then \eqref{a-}--\eqref{e} continue to hold.

By \eqref{l+f+}--\eqref{l-f-}, Theorem~\ref{Toep-th}, and the assumptions on $f$, the hypotheses of Theorem~\ref{thm:mmd} are satisfied for $f$ and $\Lambda_n^+$ as well as for $-f$ and $\Lambda_n^-$.
Hence, by Theorem~\ref{thm:mmd} applied first with $f$ and $\Lambda_n^+$ and then with $-f$ and $\Lambda_n^-$, we infer the existence of two a.u.\ grids $\{x^+_{i,n}\}_{i=1,\ldots,\lceil n/2\rceil}$, $\{x^-_{i,n}\}_{i=1,\ldots,\lfloor n/2\rfloor}$ in $[0,\pi]$ and two permutations $\tau_n^+$ of $\{1,\ldots,\lceil n/2\rceil\}$ and $\tau_n^-$ of $\{1,\ldots,\lfloor n/2\rfloor\}$ such that, for every~$n$,
\begin{alignat}{3}
\lambda_{2\tau_n^+(i)-1}(T_n(f))&=f(x_{i,n}^+),\qquad &i&=1,\ldots,\lceil n/2\rceil,\label{f(x)}\\
-\lambda_{2\tau_n^-(i)}(T_n(f))&=-f(x_{i,n}^-),\qquad &i&=1,\ldots,\lfloor n/2\rfloor.\label{-f(x)}
\end{alignat}
Define
\begin{align*}
\{x_{1,n},\ldots,x_{n,n}\}&=\{x_{1,n}^+,x_{1,n}^-,x_{2,n}^+,x_{2,n}^-,\ldots\}
\end{align*}
and rearrange the eigenvalues of $H_n(f)$ and $T_n(f)$ as follows:
\begin{align*}
\{\lambda_1(H_n(f)),\ldots,\lambda_n(H_n(f))\}&=\{\lambda_{2\tau_n^+(1)-1}(H_n(f)),\lambda_{2\tau_n^-(1)}(H_n(f)),\lambda_{2\tau_n^+(2)-1}(H_n(f)),\lambda_{2\tau_n^-(2)}(H_n(f)),\ldots\},\\
\{\lambda_1(T_n(f)),\ldots,\lambda_n(T_n(f))\}&=\{\lambda_{2\tau_n^+(1)-1}(T_n(f)),\lambda_{2\tau_n^-(1)}(T_n(f)),\lambda_{2\tau_n^+(2)-1}(T_n(f)),\lambda_{2\tau_n^-(2)}(T_n(f)),\ldots\}.
\end{align*}
It is easy to check that $\{x_{i,n}\}_{i=1,\ldots,n}$ is an a.u.\ grid in $[0,\pi]$.
Moreover, after the above rearrangement of the eigenvalues of $T_n(f)$ and $H_n(f)$, by \eqref{f(x)}--\eqref{-f(x)} we have
\[ \lambda_i(T_n(f))=f(x_{i,n}),\qquad i=1,\ldots,n, \]
which is \eqref{a}. In addition, \eqref{a-}--\eqref{e} continue to hold, because, by construction, the considered rearrangement is associated with a permutation $\tau_n$ of the columns of $V_n$ and the eigenvalues of $T_n(f)$ and $H_n(f)$ such that $\tau_n$ maps odd indices to odd indices and even indices to even indices. Of course, \eqref{a-}--\eqref{e} continue to hold with a new matrix $V_n$ obtained by permuting the columns of the old $V_n$ through the permutation $\tau_n$. With abuse of notation, we denote again by $V_n$ the new matrix $V_n$, so that \eqref{a-}--\eqref{e} hold unchanged. The thesis is proved.
\end{proof}

Theorem~\ref{cbt'} is our fourth main result. It is an extension of Corollary~\ref{cb} to the case where $f\in L^1([-\pi,\pi])$ is only assumed to have real Fourier coefficients. In this case, the moduli of the eigenvalues of $H_n(f)$ coincide with the singular values of $T_n(f)$, as shown by the following remark.

\begin{remark}\label{eig-singv}
For every matrix $A\in\mathbb C^{n\times n}$, the singular values of $A$ and $Y_nA$ coincide because $Y_n$ is a unitary (permutation) matrix. In particular, for every $f\in L^1([-\pi,\pi])$, the singular values of $T_n(f)$ and $H_n(f)$ coincide. In the case where $T_n(f)$ is real, which happens whenever the Fourier coefficients of $f$ are real, the matrix $H_n(f)$ is real and symmetric, and so the singular values of $H_n(f)$, i.e., the singular values of $T_n(f)$, coincide with the moduli of the eigenvalues of $H_n(f)$.
\end{remark}

In what follows, a matrix $S\in\mathbb C^{n\times n}$ is referred to as a phase matrix if $S$ is diagonal and $|S_{ii}|=1$ for all $i=1,\ldots,n$. Note that every phase matrix is unitary.

\begin{theorem}\label{cbt'}
Suppose that the Fourier coefficients of $f\in L^1([-\pi,\pi])$ are real. Then, for every $n$, the following properties hold. 
\begin{enumerate}[nolistsep,leftmargin=*]
	\item There exists an ordering of the singular values of $T_n(f)$ and eigenvalues of $H_n(f)$ such that $|\lambda_i(H_n(f))|=\sigma_i(T_n(f))$, $i=1,\ldots,n$.
	\item Suppose that $|\lambda_i(H_n(f))|=\sigma_i(T_n(f))$, $i=1,\ldots,n$, and let $\{\uu_1,\ldots,\uu_n\}$ be an orthonormal basis of $\mathbb R^n$ consisting of left singular vectors of $T_n(f)$ associated with $\sigma_1(T_n(f)),\ldots,\sigma_n(T_n(f))$, respectively. 
	Then $\{Y_n\uu_1,\ldots,Y_n\uu_n\}$ is an orthonormal basis of $\mathbb R^n$ consisting of eigenvectors of $H_n(f)^2$ associated with $\lambda_1(H_n(f))^2,\ldots,\lambda_n(H_n(f))^2$, respectively.
	\item Suppose that $|\lambda_i(H_n(f))|=\sigma_i(T_n(f))$, $i=1,\ldots,n$, and let $\{\vv_1,\ldots,\vv_n\}$ be an orthonormal basis of $\mathbb R^n$ consisting of right singular vectors of $T_n(f)$ associated with $\sigma_1(T_n(f)),\ldots,\sigma_n(T_n(f))$, respectively. 
	Then $\{\vv_1,\ldots,\vv_n\}$ is an orthonormal basis of $\mathbb R^n$ consisting of eigenvectors of $H_n(f)^2$ associated with $\lambda_1(H_n(f))^2,\ldots,\lambda_n(H_n(f))^2$, respectively. 
\end{enumerate}
\end{theorem}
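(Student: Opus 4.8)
The plan is to reduce all three statements to a single algebraic identity. Since the Fourier coefficients of $f$ are real, $T_n(f)$ is a real matrix, and since every Toeplitz matrix is persymmetric, one checks directly that $Y_nT_n(f)Y_n=T_n(f)^*$: indeed $(Y_nT_n(f)Y_n)_{ij}=(T_n(f))_{n+1-i,\,n+1-j}=f_{j-i}=(T_n(f)^*)_{ij}$, using that the $f_k$ are real. Combining this with $Y_n^2=I_n$ gives
\begin{equation*}
H_n(f)^2=Y_nT_n(f)Y_nT_n(f)=T_n(f)^*T_n(f)=Y_n\bigl(T_n(f)T_n(f)^*\bigr)Y_n.
\end{equation*}
(The same identity also follows from $H_n(f)=H_n(f)^*$, which we already know.) Once this is in hand, everything is a corollary of the spectral theorem applied to $H_n(f)$, which is real symmetric by Remark~\ref{eig-singv}, together with the standard fact that the left (resp.\ right) singular vectors of a matrix $A$ are exactly the eigenvectors of $AA^*$ (resp.\ $A^*A$), with eigenvalues the squared singular values.

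For Property~1 I would invoke Remark~\ref{eig-singv}: the singular values of $T_n(f)$ coincide, as a multiset, with the moduli of the eigenvalues of $H_n(f)$. Hence we may index both sets so that $|\lambda_i(H_n(f))|=\sigma_i(T_n(f))$ for $i=1,\ldots,n$, which is the claim; this fixes once and for all the ordering used in Properties~2 and~3. For Property~3, let $\{\vv_1,\ldots,\vv_n\}$ be an orthonormal basis of right singular vectors with $\vv_i$ associated with $\sigma_i(T_n(f))$. Then $T_n(f)^*T_n(f)\vv_i=\sigma_i(T_n(f))^2\vv_i$, and by the identity above this reads $H_n(f)^2\vv_i=\sigma_i(T_n(f))^2\vv_i=|\lambda_i(H_n(f))|^2\vv_i=\lambda_i(H_n(f))^2\vv_i$, using $\lambda_i(H_n(f))\in\mathbb R$; so $\{\vv_1,\ldots,\vv_n\}$ is the desired orthonormal eigenbasis of $H_n(f)^2$. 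For Property~2, let $\{\uu_1,\ldots,\uu_n\}$ be an orthonormal basis of left singular vectors, so $T_n(f)T_n(f)^*\uu_i=\sigma_i(T_n(f))^2\uu_i$; applying $Y_n$ and using $H_n(f)^2=Y_n(T_n(f)T_n(f)^*)Y_n$ together with $Y_n^2=I_n$ gives $H_n(f)^2(Y_n\uu_i)=Y_n(T_n(f)T_n(f)^*)\uu_i=\sigma_i(T_n(f))^2(Y_n\uu_i)=\lambda_i(H_n(f))^2(Y_n\uu_i)$, and since $Y_n$ is orthogonal the set $\{Y_n\uu_1,\ldots,Y_n\uu_n\}$ is again orthonormal.

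There is no genuine obstacle here; the only thing needing care is the bookkeeping, namely ensuring that the ordering chosen in Property~1 is the one carried through Properties~2 and~3, and that ``associated with $\sigma_i(T_n(f))$'' is matched consistently with ``associated with $\lambda_i(H_n(f))^2$'' — which is immediate from $|\lambda_i(H_n(f))|=\sigma_i(T_n(f))$. One could also remark that, in view of $T_n(f)T_n(f)^*=Y_n(T_n(f)^*T_n(f))Y_n$, the left singular vectors of $T_n(f)$ are precisely $Y_n$ times its right singular vectors, so Properties~2 and~3 are two faces of the same statement; keeping both forms is nonetheless convenient for the MINRES analysis.
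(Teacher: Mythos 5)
Your proof is correct. It reaches the same conclusions as the paper but by a genuinely different (and arguably cleaner) computation. The paper fixes a singular value decomposition $T_n(f)=U_n\Sigma_nV_n^*$, introduces a phase matrix $S_n$ with $\Sigma_n=S_n\Lambda_n$ and $\Lambda_n={\rm diag}(\lambda_1(H_n(f)),\ldots,\lambda_n(H_n(f)))$, and uses the symmetry of $H_n(f)$ to compute
\begin{equation*}
H_n(f)^2=H_n(f)H_n(f)^*=Y_nU_n\Lambda_n^2(Y_nU_n)^*,
\end{equation*}
from which item~2 is read off, item~3 being declared ``analogous'' and left to the reader. You instead exploit the persymmetry identity $Y_nT_n(f)Y_n=T_n(f)^*$ (valid because the $f_k$ are real), deduce $H_n(f)^2=T_n(f)^*T_n(f)=Y_n\bigl(T_n(f)T_n(f)^*\bigr)Y_n$, and then invoke the standard characterization of right (resp.\ left) singular vectors as eigenvectors of $A^*A$ (resp.\ $AA^*$). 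Both arguments ultimately rest on $H_n(f)$ being real symmetric --- as you note, your key identity also follows from $H_n(f)^2=H_n(f)^*H_n(f)$ --- but your route avoids the phase-matrix bookkeeping, treats items~2 and~3 with equal explicitness, and makes transparent the closing observation that the left singular vectors of $T_n(f)$ are exactly $Y_n$ times its right singular vectors. Item~1 is handled identically in both proofs, via Remark~\ref{eig-singv}, and your care with carrying the ordering $|\lambda_i(H_n(f))|=\sigma_i(T_n(f))$ from item~1 into items~2 and~3 matches what the theorem requires.
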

\begin{proof}\hfill
\begin{enumerate}[leftmargin=*,nolistsep]
	\item This has been proved in Remark~\ref{eig-singv}.
	\item Suppose that $|\lambda_i(H_n(f))|=\sigma_i(T_n(f))$, $i=1,\ldots,n$, and let $\{\uu_1,\ldots,\uu_n\}$ be an orthonormal basis of $\mathbb R^n$ consisting of left singular vectors of $T_n(f)$ associated with $\sigma_1(T_n(f)),\ldots,\sigma_n(T_n(f))$, respectively. Set $U_n=\left[\uu_1\,|\,\cdots\,|\,\uu_n\right]$ and let $T_n(f)=U_n\Sigma_nV_n^*$ be a singular value decomposition of $T_n(f)$ with left singular vectors $\uu_1,\ldots,\uu_n$, where $V_n$ is unitary like $U_n$ and $\Sigma_n$ is the diagonal matrix whose diagonal elements are the singular values $\sigma_1(T_n(f)),\ldots,\sigma_n(T_n(f))$.
	Let $S_n$ be a phase matrix such that $\Sigma_n=S_n\Lambda_n$, where 
	\[ \Lambda_n=\mathop{\rm diag}_{i=1,\ldots,n}\lambda_i(H_n(f)). \]
	Then,
	\[ H_n(f)=Y_nT_n(f)=Y_nU_n\Sigma_nV_n^*=Y_nU_nS_n\Lambda_nV_n^*. \]
	Since $H_n(f)$ is real and symmetric, we have
	\begin{align*}
	H_n(f)^2&=H_n(f)H_n(f)^*=Y_nU_nS_n\Lambda_nV_n^*V_n\Lambda_n^*S_n^*U_n^*Y_n^*=Y_nU_n\Lambda_n^2(Y_nU_n)^*.
	\end{align*}
	This means that $Y_n\uu_i$ is an eigenvector of $H_n(f)^2$ associated with the eigenvalue $\lambda_i(H_n(f))^2$ for all $i=1,\ldots,n$.
	\item The proof of property~3 is analogous to the proof of property~2. The details are left to the reader. \qedhere
\end{enumerate}
\end{proof}

\begin{remark}
For every diagonalizable matrix $A\in\mathbb C^{n\times n}$, the eigenvectors of $A$ and $A^2$ coincide whenever the eigenvalues of $A^2$ (i.e., the squares of the eigenvalues of $A$) are distinct. More precisely, in this case we have that $\vv$ is an eigenvector of $A$ associated with the eigenvalue $\lambda$ if and only if $\vv$ is an eigenvector of $A^2$ associated with $\lambda^2$. It follows that, {\em in items~{\rm2} and~{\rm3} of Theorem~{\rm\ref{cbt'}}, we can replace ``$H_n(f)^2$'' with ``$H_n(f)$'' and ``$\lambda_1(H_n(f))^2,\ldots,\lambda_n(H_n(f))^2$'' with ``$\lambda_1(H_n(f)),\ldots,\lambda_n(H_n(f))$'' whenever the eigenvalues of $H_n(f)^2$ are distinct.}
\end{remark}

Theorem~\ref{cb'} is our fifth main result. To prove it, we need two auxiliary lemmas. The first one is a plain extension of Widom's Lemma~\ref{Widom's}. The second one combines a few classical results on Toeplitz matrices \cite{GLTbookI}.

\begin{lemma}\label{Widom's'}
Let $f\in L^1([-\pi,\pi])$ and let $d$ be the distance of the complex zero from the convex hull of the essential range $\mathcal{ER}(f)$. Suppose that $|f|$ is not a.e.\ constant. Then, the singular values of $T_n(f)$ lie in $[d,M_{|f|})$ for all $n$, where $M_{|f|}=\mathop{\rm ess\,sup}_{[-\pi,\pi]}|f|$.
\end{lemma}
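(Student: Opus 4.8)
The plan is to leverage Widom's Lemma~\ref{Widom's}, which already places the singular values of $T_n(f)$ in the \emph{closed} interval $[d,M_{|f|}]$, and to sharpen the right endpoint to a strict bound by feeding an auxiliary Toeplitz matrix into Theorem~\ref{Toep-th}. First I would dispose of the trivial case $M_{|f|}=\infty$: then $[d,M_{|f|})=[d,+\infty)$ and the statement reduces to the lower bound $\sigma_i(T_n(f))\ge d$, which is exactly Widom's Lemma~\ref{Widom's}. So from now on I may assume $M_{|f|}<\infty$, i.e.\ $f\in L^\infty([-\pi,\pi])$; the lower bound $\sigma_i(T_n(f))\ge d$ is again Widom's Lemma~\ref{Widom's}, so the only remaining task is to prove $\sigma_{\max}(T_n(f))<M_{|f|}$.

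Since $\sigma_{\max}(T_n(f))^2=\lambda_{\max}(T_n(f)^*T_n(f))=\lambda_{\max}(T_n(\bar f)T_n(f))$, the key ingredient is the classical positive-semidefinite (L\"owner) inequality $T_n(\bar f)T_n(f)\preceq T_n(|f|^2)$. I would justify it by representing $T_n(g)$, for $g\in L^\infty$, as $Q_nM_gQ_n$ acting on the $n$-dimensional subspace $\mathcal P_n\subset L^2([-\pi,\pi])$ spanned by $1,\mathrm e^{\mathrm ix},\dots,\mathrm e^{\mathrm i(n-1)x}$, where $Q_n$ is the orthogonal projection onto $\mathcal P_n$ and $M_g$ is multiplication by $g$; then, splitting $fp=Q_n(fp)+(I-Q_n)(fp)$,
\[ T_n(\bar f)T_n(f)=Q_nM_{\bar f}M_fQ_n-Q_nM_{\bar f}(I-Q_n)M_fQ_n=T_n(|f|^2)-(M_fQ_n)^*(I-Q_n)(M_fQ_n), \]
and the subtracted operator is positive semidefinite because $I-Q_n$ is an orthogonal projection. (Alternatively one can run the equality-case analysis of the Rayleigh quotient: identifying $\mathbf v\in\mathbb C^n$ with the trigonometric polynomial $p\in\mathcal P_n$, one has $\|T_n(f)\mathbf v\|_2=\|Q_n(fp)\|_{L^2}\le\|fp\|_{L^2}\le M_{|f|}\|p\|_{L^2}=M_{|f|}\|\mathbf v\|_2$, and equality throughout forces $|f|=M_{|f|}$ a.e.\ on $\{p\ne0\}$, hence a.e.\ since a nonzero trigonometric polynomial has only finitely many zeros on $[-\pi,\pi]$, contradicting the hypothesis.)

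From $T_n(\bar f)T_n(f)\preceq T_n(|f|^2)$ we get $\lambda_{\max}(T_n(\bar f)T_n(f))\le\lambda_{\max}(T_n(|f|^2))$. Now $|f|^2\in L^\infty\subseteq L^1$ is real and, since $|f|$ is not a.e.\ constant, neither is $|f|^2$; hence part~2 of Theorem~\ref{Toep-th} applies to $|f|^2$ and gives $\lambda_{\max}(T_n(|f|^2))<\mathop{\rm ess\,sup}_{[-\pi,\pi]}|f|^2=\bigl(\mathop{\rm ess\,sup}_{[-\pi,\pi]}|f|\bigr)^2=M_{|f|}^2$. Chaining the two estimates gives $\sigma_{\max}(T_n(f))^2<M_{|f|}^2$, i.e.\ $\sigma_{\max}(T_n(f))<M_{|f|}$, which completes the proof. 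The only genuinely delicate point is the matrix inequality $T_n(\bar f)T_n(f)\preceq T_n(|f|^2)$ --- it is classical (it is precisely what underlies the estimate $\|T_n(f)\|\le\|f\|_{L^\infty}$) but deserves the short justification above; everything else is bookkeeping (reducing cleanly to $f\in L^\infty$, so that $|f|^2\in L^1$ and $M_f$ is a bounded operator) together with the elementary identity $\mathop{\rm ess\,sup}|f|^2=(\mathop{\rm ess\,sup}|f|)^2$.
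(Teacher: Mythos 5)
Your proof is correct, and its skeleton matches the paper's: invoke Widom's Lemma~\ref{Widom's} for the closed interval $[d,M_{|f|}]$, then rule out the right endpoint by comparing $T_n(f)$ with a Hermitian Toeplitz matrix generated by a nonnegative, non-a.e.-constant symbol and applying the strict localization of Theorem~\ref{Toep-th}(2). The difference lies in the comparison step. The paper quotes the known norm inequality $\|T_n(f)\|\le\|T_n(|f|)\|$ (citing \cite[Lemma~6.3]{GLTbookI} with $p=\infty$) and then applies Theorem~\ref{Toep-th}(2) to $|f|$; you instead prove from scratch the L\"owner inequality $T_n(\bar f)T_n(f)\preceq T_n(|f|^2)$ via the compression representation $T_n(g)=Q_nM_gQ_n|_{\mathcal P_n}$ and apply Theorem~\ref{Toep-th}(2) to $|f|^2$. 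Your route is self-contained (it does not lean on the cited lemma, only on $T_n(f)^*=T_n(\bar f)$ and the projection identity), at the cost of an extra reduction to $f\in L^\infty$ so that $M_f$ is bounded and $|f|^2\in L^1$ --- a reduction you handle cleanly, since for $M_{|f|}=\infty$ the claim degenerates to Widom's lower bound. The parenthetical Rayleigh-quotient argument is also valid and arguably the shortest of all: if $\sigma_{\max}(T_n(f))=M_{|f|}$ were attained at some $\mathbf v\ne0$ (as it must be in finite dimensions), equality in $\|Q_n(fp)\|_{L^2}\le\|fp\|_{L^2}\le M_{|f|}\|p\|_{L^2}$ forces $|f|=M_{|f|}$ a.e.\ off the finite zero set of $p$, contradicting the hypothesis; just make sure the inner product on $L^2$ carries the $\frac1{2\pi}$ normalization so that $\|p\|_{L^2}=\|\mathbf v\|_2$. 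All details check out: $|f|^2$ is not a.e.\ constant whenever $|f|$ is not, and $\mathop{\rm ess\,sup}|f|^2=(\mathop{\rm ess\,sup}|f|)^2$.
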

\begin{proof}
By Widom's Lemma~\ref{Widom's}, the singular values of $T_n(f)$ lie in $[d,M_{|f|}]$ for all $n$. We show that no singular value of $T_n(f)$ can be equal to $M_{|f|}$, i.e., $\|T_n(f)\|<M_{|f|}$, where $\|T_n(f)\|$ is the spectral (or Euclidean) norm of $T_n(f)$ (the largest singular value of $T_n(f)$). It is known that $\|T_n(f)\|\le\|T_n(|f|)\|$; see, e.g., \cite[Lemma~6.3]{GLTbookI} applied with $p=\infty$. By Theorem~\ref{Toep-th} and the assumption that $|f|$ is not a.e.\ constant, we infer that $T_n(|f|)$ is a Hermitian positive definite matrix whose eigenvalues lie in $(m_{|f|},M_{|f|})\subseteq(0,M_{|f|})$. In particular, the largest eigenvalue of $T_n(|f|)$ coincides with $\|T_n(|f|)\|$ and is smaller that $M_{|f|}$. Thus, $\|T_n(f)\|\le\|T_n(|f|)\|<M_{|f|}$.
\end{proof}

\begin{lemma}\label{cr-book}
Let $f(\theta)=\sum_{k=-r}^rf_k{\rm e}^{{\rm i}k\theta}$ be a trigonometric polynomial of degree $r$, and let
\[ m_{|f|}=\min_{[-\pi,\pi]}|f|,\qquad M_{|f|}=\max_{[-\pi,\pi]}|f|. \]
Then, for every $n$, the singular values of $T_n(f)$ lie in $[m_{|f|},M_{|f|}]$ except for at most $2r$ outliers smaller than $m_{|f|}$. 
\end{lemma}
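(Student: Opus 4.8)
The plan is to reduce the statement to a known outlier result for Hermitian Toeplitz matrices by squaring. Since the Fourier coefficients of $f$ are real (a trigonometric polynomial always has Fourier coefficients, and here they are the $f_k$), the matrix $T_n(f)$ is real, and its singular values are the square roots of the eigenvalues of $T_n(f)^*T_n(f)=T_n(f)^{\mathsf T}T_n(f)$. The first step is therefore to relate this Gram matrix to a Toeplitz matrix generated by $|f|^2$. We have $|f|^2=f\overline f=\overline f\,f$, and $|f|^2$ is itself a trigonometric polynomial of degree $r$ (its Fourier coefficients vanish outside $|k|\le r$), with $\min_{[-\pi,\pi]}|f|^2=m_{|f|}^2$ and $\max_{[-\pi,\pi]}|f|^2=M_{|f|}^2$. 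The key identity is the classical ``Toeplitz-times-Toeplitz is almost Toeplitz'' fact: $T_n(\overline f)T_n(f)=T_n(|f|^2)-R_n$, where $R_n$ is a correction matrix of rank at most $2r$ (it is supported in the top-left and bottom-right $r\times r$ corners, arising from the truncation involved in multiplying two banded Toeplitz matrices). I would either cite this from \cite{GLTbookI} or verify it directly by comparing entries.

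Next I would apply Theorem~\ref{Toep-th} (item~1) to the real nonnegative symbol $|f|^2$: all eigenvalues of $T_n(|f|^2)$ lie in $[m_{|f|}^2,M_{|f|}^2]$. Then the Weyl interlacing/perturbation inequalities for eigenvalues under a perturbation of rank $\le 2r$ give that all but at most $2r$ eigenvalues of $T_n(\overline f)T_n(f)=T_n(|f|^2)-R_n$ still lie in $[m_{|f|}^2,M_{|f|}^2]$; moreover, since $R_n$ is positive semidefinite in the relevant sense — actually I must be careful here: $R_n$ need not be semidefinite, so the cleanest route is to note that $T_n(\overline f)T_n(f)\succeq m_{|f|}^2 I_n$ is false in general because of the corner terms, which is exactly why the $2r$ small outliers appear. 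I would instead argue as follows: $\sigma_i(T_n(f))^2=\lambda_i(T_n(|f|^2)-R_n)$; by Weyl's inequality $\lambda_i(T_n(|f|^2)-R_n)\le\lambda_i(T_n(|f|^2))+\lambda_{\max}(-R_n)$ and $\ge\lambda_i(T_n(|f|^2))+\lambda_{\min}(-R_n)$, but this alone only shifts the whole spectrum. The sharper tool is the ``at most $k$ eigenvalues escape an interval under a rank-$k$ perturbation'' principle: if $B=A+E$ with $\mathrm{rank}(E)\le k$ and $A$ has all eigenvalues in $[\alpha,\beta]$, then $B$ has at most $k$ eigenvalues outside $[\alpha,\beta]$. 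Applying this with $A=T_n(|f|^2)$, $E=-R_n$, $k=2r$, $[\alpha,\beta]=[m_{|f|}^2,M_{|f|}^2]$ gives the claim for the squares, and taking square roots yields that at most $2r$ singular values of $T_n(f)$ lie outside $[m_{|f|},M_{|f|}]$.

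Finally I would pin down the direction of the outliers: they are \emph{smaller} than $m_{|f|}$, not larger than $M_{|f|}$. For this I would use Widom's Lemma~\ref{Widom's} (or its refinement Lemma~\ref{Widom's'}), which already guarantees $\sigma_{\max}(T_n(f))\le M_{|f|}$ unconditionally — so no singular value can exceed $M_{|f|}$, and hence every outlier must fall below $m_{|f|}$. Combining the rank-$2r$ escape bound with this upper bound gives exactly the statement.

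The main obstacle I anticipate is the bookkeeping in Step~1: writing down $R_n$ precisely, checking that its rank is at most $2r$ (rather than, say, $2r$ in each corner for a total of $4r$, which would weaken the constant), and making sure the identity $T_n(\overline f)T_n(f)=T_n(|f|^2)-R_n$ is stated with the correct sign and corner structure for a general, possibly non-symmetric, banded Toeplitz matrix. One must track that the ``defect'' in the product comes only from the $f_k$ with $k>0$ interacting with $\overline{f_k}$ near the bottom edge and the $f_k$ with $k<0$ near the top edge, which indeed caps the rank at $2r$; a clean way to see this is to embed $T_n(f)$ as a compression of the bi-infinite Laurent operator $L(f)$ and use $L(\overline f)L(f)=L(|f|^2)$ together with the Gohberg–Feldman–type formula for the compression defect.
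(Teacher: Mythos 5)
Your proof is correct, but it takes a genuinely different route from the paper's. The paper stays at the level of singular values: for $n\ge 2r+1$ it compares $T_n(f)$ with the circulant $C_n(f)$, whose singular values are samples of $|f|$ and hence lie in $[m_{|f|},M_{|f|}]$, notes that ${\rm rank}(T_n(f)-C_n(f))\le 2r$, and invokes the interlacing theorem for singular values to conclude that at most $2r$ singular values fall below $\sigma_{\min}(C_n(f))\ge m_{|f|}$, with Widom's Lemma~\ref{Widom's} capping everything at $M_{|f|}$ from above (the case $n\le 2r$ being handled by Widom alone). You instead square up: $T_n(f)^*T_n(f)=T_n(\overline f)T_n(f)=T_n(|f|^2)-R_n$ with ${\rm rank}(R_n)\le 2r$, apply Theorem~\ref{Toep-th} to the nonnegative symbol $|f|^2$, use the Hermitian low-rank escape principle, take square roots, and again let Widom exclude outliers above $M_{|f|}$. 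Both arguments hinge on the same mechanism --- a rank-$2r$ correction to an auxiliary matrix whose spectrum is already confined to the target interval --- so they are of comparable depth; the circulant route avoids squaring and the product formula, while yours pins the reference interval at exactly $[m_{|f|}^2,M_{|f|}^2]$ rather than at sampled values of $|f|$. Your two anticipated obstacles do resolve in your favor: in the Widom product formula each Hankel factor of a degree-$r$ symbol has rank at most $r$, so the total correction has rank at most $2r$ (not $4r$); and the escape principle holds in the form you need, namely at most $k$ eigenvalues of a Hermitian matrix can cross each endpoint of the interval under a Hermitian rank-$k$ perturbation, the upward crossings being ruled out here by Widom. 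One small correction: the lemma does not assume the $f_k$ are real, so drop that hypothesis from your first step --- it is not needed, since the singular values of $T_n(f)$ are the square roots of the eigenvalues of $T_n(f)^*T_n(f)=T_n(\overline f)T_n(f)$ for arbitrary complex coefficients.
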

\begin{proof}
The thesis follows immediately from Lemma~\ref{Widom's} for $n\le2r$.
Suppose that $n\ge2r+1$. Let $C_n(f)$ be the circulant matrix defined in \cite[p.~109]{GLTbookI}. By the first inequality in \cite[p.~110]{GLTbookI}, we have
\[ {\rm rank}(T_n(f)-C_n(f))\le 2r. \]
Hence, by the interlacing theorem for singular values \cite[Theorem~2.11]{GLTbookI}, the singular values of $T_n(f)$ lie between $\sigma_{\min}(C_n(f))$ and $\sigma_{\max}(C_n(f))$, except for at most $2r$ singular values smaller than $\sigma_{\min}(C_n(f))$ and $2r$ singular values larger than $\sigma_{\max}(C_n(f))$ (which are anyway $\le M_{|f|}$ by Lemma~\ref{Widom's}). We know from \cite[Theorem~6.4]{GLTbookI} that $\sigma_{\min}(C_n(f))$ and $\sigma_{\max}(C_n(f))$ lie between $[m_{|f|},M_{|f|}]$. 
Thus, all the singular values of $T_n(f)$ lie in $[m_{|f|},M_{|f|}]$ except for at most $2r$ outliers smaller than $m_{|f|}$.
\end{proof}

\begin{theorem}\label{cb'}
Suppose that the Fourier coefficients of $f\in L^1([-\pi,\pi])$ are real. 
Then, for every $n$, the following properties hold.
\begin{enumerate}[nolistsep,leftmargin=*]
	\item The eigenvalues of $H_n(f)$ lie in $[-M_{|f|},-d]\cup[d,M_{|f|}]$, where $d$ is the distance of the complex zero from the convex hull of the essential range $\mathcal{ER}(f)$ and $M_{|f|}=\mathop{\rm ess\,sup}_{[-\pi,\pi]}|f|$.
	\item Assume that $|f|$ is not a.e.\ constant. Then, the eigenvalues of $H_n(f)$ lie in $(-M_{|f|},-d]\cup[d,M_{|f|})$, where $d$ and $M_{|f|}$ are as in item~1.
	\item Assume that $f(\theta)=\sum_{k=-r}^rf_k{\rm e}^{{\rm i}k\theta}$ is a trigonometric polynomial of degree $r$, and let
	\[ m_{|f|}=\min_{[-\pi,\pi]}|f|,\qquad M_{|f|}=\max_{[-\pi,\pi]}|f|. \]
	Then, the eigenvalues of $H_n(f)$ lie in $[-M_{|f|},-m_{|f|}]\cup[m_{|f|},M_{|f|}]$ except for at most $2r$ small outliers lying in $(-m_{|f|},m_{|f|})$. 
\end{enumerate}
\end{theorem}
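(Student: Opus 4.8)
The plan is to reduce all three items to the corresponding localization results for the \emph{singular values} of $T_n(f)$, exploiting the identity between the moduli of the eigenvalues of $H_n(f)$ and the singular values of $T_n(f)$. First I would recall from Remark~\ref{eig-singv} that, since the Fourier coefficients of $f$ are real, the matrix $H_n(f)=Y_nT_n(f)$ is real and symmetric, so its eigenvalues $\lambda_i(H_n(f))$ are real, and — because $Y_n$ is unitary — the multiset $\{|\lambda_i(H_n(f))|\}_{i=1}^n$ coincides with the multiset of singular values $\{\sigma_i(T_n(f))\}_{i=1}^n$; equivalently, after the ordering provided by item~1 of Theorem~\ref{cbt'}, $|\lambda_i(H_n(f))|=\sigma_i(T_n(f))$ for all $i=1,\ldots,n$. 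This single observation is the bridge: any interval containing all singular values of $T_n(f)$ translates, under $t\mapsto\{-t,t\}$, into a symmetric pair of intervals containing all eigenvalues of $H_n(f)$.

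For item~1 I would invoke Widom's Lemma~\ref{Widom's}: every singular value of $T_n(f)$ lies in $[d,M_{|f|}]$. Hence every eigenvalue of $H_n(f)$ satisfies $d\le|\lambda_i(H_n(f))|\le M_{|f|}$, i.e.\ $\lambda_i(H_n(f))\in[-M_{|f|},-d]\cup[d,M_{|f|}]$. For item~2 I would use Lemma~\ref{Widom's'} in place of Widom's lemma: when $|f|$ is not a.e.\ constant, the singular values of $T_n(f)$ lie in $[d,M_{|f|})$, and the same translation yields eigenvalues of $H_n(f)$ in $(-M_{|f|},-d]\cup[d,M_{|f|})$.

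For item~3 I would use Lemma~\ref{cr-book}: when $f(\theta)=\sum_{k=-r}^r f_k\mathrm e^{\mathrm ik\theta}$, all singular values of $T_n(f)$ lie in $[m_{|f|},M_{|f|}]$ except for at most $2r$ outliers that are (nonnegative and) strictly smaller than $m_{|f|}$. Translating via $|\lambda_i(H_n(f))|=\sigma_i(T_n(f))$: the eigenvalues of $H_n(f)$ whose modulus lies in $[m_{|f|},M_{|f|}]$ belong to $[-M_{|f|},-m_{|f|}]\cup[m_{|f|},M_{|f|}]$, while each of the at most $2r$ outlier eigenvalues has modulus in $[0,m_{|f|})$ and therefore lies in the open interval $(-m_{|f|},m_{|f|})$.

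There is essentially no genuine obstacle here: the argument is a mechanical transfer through the eigenvalue/singular-value correspondence. The only points requiring minor care are that the correspondence is a statement about multisets (with the matching ordering from Theorem~\ref{cbt'}), that one must use the \emph{strict} versions (Lemma~\ref{Widom's'} and Lemma~\ref{cr-book}) to get the half-open intervals in items~2 and~3, and that in item~3 the open interval $(-m_{|f|},m_{|f|})$ is needed so as to accommodate a possibly vanishing outlier eigenvalue.
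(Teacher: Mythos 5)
Your proposal is correct and follows essentially the same route as the paper: the paper's proof likewise reduces each item to the singular-value localization results (Lemma~\ref{Widom's}, Lemma~\ref{Widom's'}, and Lemma~\ref{cr-book}) via the identity $|\lambda_i(H_n(f))|=\sigma_i(T_n(f))$ from Remark~\ref{eig-singv}. Your additional remarks on the multiset matching and the need for the strict versions in items~2 and~3 are accurate elaborations of the same argument.
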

\begin{proof}
\begin{enumerate}[nolistsep,leftmargin=*]
	\item We know that $|\lambda_i(H_n(f))|=\sigma_i(T_n(f))$, $i=1,\ldots,n$. Hence, the result follows from Lemma~\ref{Widom's}. 
	\item We know that $|\lambda_i(H_n(f))|=\sigma_i(T_n(f))$, $i=1,\ldots,n$. Hence, the result follows from Lemma~\ref{Widom's'}. 
	\item We know that $|\lambda_i(H_n(f))|=\sigma_i(T_n(f))$, $i=1,\ldots,n$. Hence, the result follows from Lemma~\ref{cr-book}. \qedhere
\end{enumerate}
\end{proof}

Our last main result (Theorem~\ref{psi-thm}) is more an observation than a ``main result'', but we decided anyway to state it here, in the section of main results, as it completes our spectral study of flipped Toeplitz matrices. To prove Theorem~\ref{psi-thm}, we need the following basic lemmas, which can be seen as corollaries of \cite[Proposition~3.1.2]{Grafakos}; see also \cite[Exercise~4.5]{Vretblad}. For the reader's convenience, we include the short proofs.

\begin{lemma}\label{feven}
Let $f\in L^1([-\pi,\pi])$. Then, the following are equivalent.
\begin{enumerate}[nolistsep,leftmargin=*]
	\item $f$ is real and even a.e.\ in $[-\pi,\pi]$, i.e., $f(-x)=f(x)\in\mathbb R$ for a.e.\ $x\in[-\pi,\pi]$.
	\item The Fourier coefficients of $f$ are real and even, i.e., $f_{-k}=f_k\in\mathbb R$ for all $k\in\mathbb Z$.
\end{enumerate}
\end{lemma}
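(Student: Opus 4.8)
The plan is to prove the equivalence of the two statements by a direct computation with Fourier coefficients, using the elementary symmetry properties of the complex exponential. First I would establish the implication $(2)\Rightarrow(1)$: assuming $f_{-k}=f_k\in\mathbb R$ for all $k$, I would show that $\overline f=f$ a.e. and $f(-\,\cdot\,)=f(\,\cdot\,)$ a.e. For the reality, note that the Fourier coefficients of $\overline f$ are $\overline{f_{-k}}=\overline{f_k}=f_k$ (using first that $f_{-k}=f_k$ and then that $f_k\in\mathbb R$), so $\overline f$ and $f$ have the same Fourier coefficients, hence $\overline f=f$ a.e.\ by the uniqueness of Fourier coefficients for $L^1$ functions. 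For the evenness, the Fourier coefficients of the function $x\mapsto f(-x)$ are $\frac1{2\pi}\int_{-\pi}^\pi f(-x)\mathrm e^{-\mathrm ikx}\,\mathrm dx = \frac1{2\pi}\int_{-\pi}^\pi f(y)\mathrm e^{\mathrm iky}\,\mathrm dy = f_{-k} = f_k$ (substituting $y=-x$ and using $2\pi$-periodicity, then the hypothesis), so $f(-\,\cdot\,)$ and $f$ again share all Fourier coefficients and therefore agree a.e.

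For the converse $(1)\Rightarrow(2)$, I would reverse the computation. If $f$ is real a.e., then $\overline f=f$ a.e., so $f_k=\frac1{2\pi}\int_{-\pi}^\pi f(x)\mathrm e^{-\mathrm ikx}\,\mathrm dx$ satisfies $\overline{f_k}=\frac1{2\pi}\int_{-\pi}^\pi \overline{f(x)}\mathrm e^{\mathrm ikx}\,\mathrm dx=\frac1{2\pi}\int_{-\pi}^\pi f(x)\mathrm e^{\mathrm ikx}\,\mathrm dx=f_{-k}$. If in addition $f$ is even a.e., the substitution $y=-x$ gives $f_k=\frac1{2\pi}\int_{-\pi}^\pi f(-y)\mathrm e^{\mathrm iky}\,\mathrm dy=\frac1{2\pi}\int_{-\pi}^\pi f(y)\mathrm e^{\mathrm iky}\,\mathrm dy=f_{-k}$. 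Combining the two identities yields $\overline{f_k}=f_{-k}=f_k$, i.e.\ $f_k\in\mathbb R$, and $f_{-k}=f_k$, which is exactly statement (2).

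The only genuinely non-routine ingredient is the uniqueness theorem for Fourier coefficients, namely that an $L^1([-\pi,\pi])$ function with all Fourier coefficients equal to zero is zero a.e.; this is standard (it follows from density of trigonometric polynomials or from Fej\'er's theorem) and I would simply cite it. Everything else is a change of variables and taking complex conjugates under the integral sign, both justified by absolute integrability. I expect no real obstacle; the main point of care is to keep track of the two separate symmetries (reality $\leftrightarrow$ conjugate-symmetry $\overline{f_k}=f_{-k}$, and evenness $\leftrightarrow$ $f_k=f_{-k}$) and to observe that their conjunction is equivalent to $f_{-k}=f_k\in\mathbb R$.
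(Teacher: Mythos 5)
Your proposal is correct and follows essentially the same route as the paper: a direct computation with Fourier coefficients (change of variables and conjugation under the integral) for $(1)\Rightarrow(2)$, and the uniqueness theorem for Fourier coefficients of $L^1$ functions for $(2)\Rightarrow(1)$. The only cosmetic differences are that you split the uniqueness step into two separate comparisons ($\overline f$ with $f$, then $f(-\,\cdot\,)$ with $f$) where the paper chains all three functions at once, and you derive the reality of $f_k$ from the conjugate-symmetry identity $\overline{f_k}=f_{-k}$ rather than from the cosine-integral formula $f_k=\frac1{2\pi}\int_{-\pi}^\pi f(x)\cos(kx)\,\mathrm dx$.
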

\begin{proof}
$(1\implies2)$ Suppose that $f(-x)=f(x)\in\mathbb R$ for almost every $x\in[-\pi,\pi]$. Then, for every $k\in\mathbb Z$,
\begin{align*}
f_{-k}&=\frac1{2\pi}\int_{-\pi}^\pi f(x){\rm e}^{{\rm i}kx}{\rm d}x=\frac1{2\pi}\int_{-\pi}^\pi f(-t){\rm e}^{-{\rm i}kt}{\rm d}t=\frac1{2\pi}\int_{-\pi}^\pi f(t){\rm e}^{-{\rm i}kt}{\rm d}t=f_k
\end{align*}
and $f_k$ is real, because
\begin{align*}
f_k&=\frac12(f_{-k}+f_k)=\frac12\left(\frac1{2\pi}\int_{-\pi}^\pi f(x){\rm e}^{{\rm i}kx}{\rm d}x+\frac1{2\pi}\int_{-\pi}^\pi f(x){\rm e}^{-{\rm i}kx}{\rm d}x\right)=\frac1{2\pi}\int_{-\pi}^\pi f(x)\cos(kx){\rm d}x.
\end{align*}
$(2\implies1)$ Suppose that the Fourier coefficients of $f$ are real and even. In order to prove that $f$ is real and even a.e.\ in $[-\pi,\pi]$, it suffices to prove that the three functions $f(-x)$, $f(x)$, $\overline{f(x)}$ have the same Fourier coefficients, which means that they coincide a.e.\ \cite[Theorem~5.15]{Rudinone}. Let $\{a_k\}_{k\in\mathbb Z}$ (resp., $\{b_k\}_{k\in\mathbb Z}$, $\{f_k\}_{k\in\mathbb Z}$) be the sequence of Fourier coefficients of $f(-x)$ (resp., $\overline{f(x)}$, $f(x)$). Then, taking into account that the Fourier coefficients of $f$ are real and even, for every $k\in\mathbb Z$ we have
\begin{align*}
b_k&=\frac1{2\pi}\int_{-\pi}^\pi\overline{f(x)}{\rm e}^{-{\rm i}kx}{\rm d}x=\overline{\frac1{2\pi}\int_{-\pi}^\pi f(x){\rm e}^{{\rm i}kx}{\rm d}x}=\overline{f_{-k}}=f_{-k}=\frac1{2\pi}\int_{-\pi}^\pi f(x){\rm e}^{{\rm i}kx}{\rm d}x=\frac1{2\pi}\int_{-\pi}^\pi f(-t){\rm e}^{-{\rm i}kt}{\rm d}t=a_k,
\end{align*}
hence $b_k=a_k=f_{-k}=f_k$.
\end{proof}

\begin{lemma}\label{mod(f)even}
Suppose that the Fourier coefficients of $f\in L^1([-\pi,\pi])$ are real. Then, $f(-x)=\overline{f(x)}$ for almost every $x\in[-\pi,\pi]$.
\end{lemma}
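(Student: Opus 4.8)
The plan is to mimic exactly the argument used in the $(2\implies1)$ direction of Lemma~\ref{feven}: compute the Fourier coefficients of the two functions $x\mapsto f(-x)$ and $x\mapsto\overline{f(x)}$, observe that they agree, and then invoke the uniqueness of Fourier coefficients in $L^1([-\pi,\pi])$ (\cite[Theorem~5.15]{Rudinone}) to conclude that the two functions coincide almost everywhere.

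First I would denote by $\{a_k\}_{k\in\mathbb Z}$ and $\{b_k\}_{k\in\mathbb Z}$ the Fourier coefficients of $f(-x)$ and $\overline{f(x)}$, respectively, and by $\{f_k\}_{k\in\mathbb Z}$ those of $f$. For $a_k$, the substitution $t=-x$ gives
\[ a_k=\frac1{2\pi}\int_{-\pi}^\pi f(-x){\rm e}^{-{\rm i}kx}{\rm d}x=\frac1{2\pi}\int_{-\pi}^\pi f(t){\rm e}^{{\rm i}kt}{\rm d}t=f_{-k}. \]
For $b_k$, pulling the conjugate outside the integral gives
\[ b_k=\frac1{2\pi}\int_{-\pi}^\pi\overline{f(x)}{\rm e}^{-{\rm i}kx}{\rm d}x=\overline{\frac1{2\pi}\int_{-\pi}^\pi f(x){\rm e}^{{\rm i}kx}{\rm d}x}=\overline{f_{-k}}. \]
Now the hypothesis that the Fourier coefficients of $f$ are real means $\overline{f_{-k}}=f_{-k}$, so $a_k=b_k=f_{-k}$ for every $k\in\mathbb Z$.

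Since $f(-x)$ and $\overline{f(x)}$ both lie in $L^1([-\pi,\pi])$ and have the same Fourier coefficients, they are equal almost everywhere, which is precisely the claim $f(-x)=\overline{f(x)}$ for a.e.\ $x\in[-\pi,\pi]$. There is no real obstacle here: the only mild point to be careful about is justifying the change of variables and the passage of the complex conjugate through the integral for $L^1$ functions, both of which are elementary. This lemma is essentially the ``real (but not necessarily even)'' analogue of the equivalence in Lemma~\ref{feven}, and the proof is a direct streamlining of that one.
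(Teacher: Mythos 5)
Your proof is correct and follows exactly the same route as the paper: compute the Fourier coefficients of $f(-x)$ and $\overline{f(x)}$, use the realness of the $f_k$ to identify both with $f_{-k}$, and conclude by the uniqueness of Fourier coefficients for $L^1$ functions. No gaps.
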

\begin{proof} 
It suffices to prove that the two functions $f(-x)$ and $\overline{f(x)}$ have the same Fourier coefficients, which means that they coincide a.e.\ \cite[Theorem~5.15]{Rudinone}. Let $\{a_k\}_{k\in\mathbb Z}$ (resp., $\{b_k\}_{k\in\mathbb Z}$, $\{f_k\}_{k\in\mathbb Z}$) be the sequence of Fourier coefficients of $f(-x)$ (resp., $\overline{f(x)}$, $f(x)$). Then, taking into account that the Fourier coefficients of $f$ are real, for every $k\in\mathbb Z$ we have
\begin{align*}
b_k&=\frac1{2\pi}\int_{-\pi}^\pi\overline{f(x)}{\rm e}^{-{\rm i}kx}{\rm d}x=\overline{\frac1{2\pi}\int_{-\pi}^\pi f(x){\rm e}^{{\rm i}kx}{\rm d}x}=\overline{f_{-k}}=f_{-k}=\frac1{2\pi}\int_{-\pi}^\pi f(x){\rm e}^{{\rm i}kx}{\rm d}x=\frac1{2\pi}\int_{-\pi}^\pi f(-t){\rm e}^{-{\rm i}kt}{\rm d}t=a_k,
\end{align*}
hence $b_k=a_k$.
\end{proof}

To simplify the statement of Theorem~\ref{psi-thm}, we borrow a notation from \cite{SIMAX-hankel}: for every $g:[0,\pi]\to\mathbb C$, we define the function $\psi_g:[0,2\pi]\to\mathbb C$ by setting
\[ \psi_g(x)=\left\{\begin{aligned}&g(x), &x&\in[0,\pi],\\
&\!-g(x-\pi), &x&\in(\pi,2\pi].
\end{aligned}\right. \]

\begin{theorem}\label{psi-thm}
Let $f\in L^1([-\pi,\pi])$. 
Then, the following properties hold.
\begin{enumerate}[nolistsep,leftmargin=*]
	\item Suppose that the Fourier coefficients of $f$ are real. Then,
	\[ \{T_n(f)\}_n\sim_\sigma f|_{[0,\pi]},\qquad \{H_n(f)\}_n\sim_\sigma f|_{[0,\pi]}. \]
	\item Suppose that the Fourier coefficients of $f$ are real and even.
	Then,
	\[ \{T_n(f)\}_n\sim_\lambda f|_{[0,\pi]},\qquad\{H_n(f)\}_n\sim_\lambda\psi_{f|_{[0,\pi]}},\qquad\{H_n(f)\}_n\sim_\lambda\psi_{|f|_{[0,\pi]}|}. \]
\end{enumerate}
\end{theorem}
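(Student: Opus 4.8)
The statement decomposes into two parts, each containing singular-value or eigenvalue distribution claims that should follow from already-available results combined with the symmetry relations $f(-x)=\overline{f(x)}$ (Lemma~\ref{mod(f)even}) and $f(-x)=f(x)$ (Lemma~\ref{feven}). The plan is to treat the four distribution relations in turn, each time reducing to a known distribution theorem via a change of variables that folds the interval $[-\pi,\pi]$ onto $[0,\pi]$.

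\medskip

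\noindent\emph{Step 1 (singular values of $T_n(f)$).} It is classical that $\{T_n(f)\}_n\sim_\sigma f$ on $[-\pi,\pi]$ for any $f\in L^1([-\pi,\pi])$; I would cite this (it is in \cite[Chapter~6]{GLTbookI}) and then observe that when the Fourier coefficients are real we have $|f(-x)|=|\overline{f(x)}|=|f(x)|$ a.e.\ by Lemma~\ref{mod(f)even}. Hence, for any $F\in C_c(\mathbb R)$, splitting the integral $\frac1{2\pi}\int_{-\pi}^\pi F(|f(x)|)\,{\rm d}x$ at $0$ and substituting $x\mapsto -x$ on $[-\pi,0]$ shows it equals $\frac1\pi\int_0^\pi F(|f(x)|)\,{\rm d}x$, which is exactly the singular-value functional for $f|_{[0,\pi]}$. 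Thus $\{T_n(f)\}_n\sim_\sigma f|_{[0,\pi]}$.

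\medskip

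\noindent\emph{Step 2 (singular values of $H_n(f)$).} Immediate from Step~1 and Remark~\ref{eig-singv}: $H_n(f)$ and $T_n(f)$ have the same singular values, so $\{H_n(f)\}_n\sim_\sigma f|_{[0,\pi]}$ as well.

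\medskip

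\noindent\emph{Step 3 (eigenvalues of $T_n(f)$, real even case).} When the Fourier coefficients are real \emph{and} even, $f$ itself is real and even a.e.\ by Lemma~\ref{feven}, so Theorem~\ref{Toep-th}(3) gives $\{T_n(f)\}_n\sim_\lambda f$ on $[-\pi,\pi]$, and the same fold-at-$0$ change of variables (using $f(-x)=f(x)$) yields $\{T_n(f)\}_n\sim_\lambda f|_{[0,\pi]}$.

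\medskip

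\noindent\emph{Step 4 (eigenvalues of $H_n(f)$, real even case).} Start from Theorem~\ref{r-hankel}: $\{H_n(f)\}_n\sim_\lambda H$ with $H(x)={\rm diag}(f(x),-f(x))$ on $[-\pi,\pi]$. Unwinding Definition~\ref{dd} for the $2\times2$ symbol $H$, for $F\in C_c(\mathbb C)$ the limit equals $\frac1{2\pi}\int_{-\pi}^\pi\frac{F(f(x))+F(-f(x))}{2}\,{\rm d}x$; folding at $0$ via $f(-x)=f(x)$ this becomes $\frac1\pi\int_0^\pi\frac{F(f(x))+F(-f(x))}{2}\,{\rm d}x$. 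On the other hand, for the scalar symbol $\psi_{f|_{[0,\pi]}}$ on $[0,2\pi]$ the functional is $\frac1{2\pi}\int_0^{2\pi}F(\psi_{f|_{[0,\pi]}}(x))\,{\rm d}x=\frac1{2\pi}\Big(\int_0^\pi F(f(x))\,{\rm d}x+\int_\pi^{2\pi}F(-f(x-\pi))\,{\rm d}x\Big)$; substituting $x\mapsto x+\pi$ in the second integral gives exactly $\frac1\pi\int_0^\pi\frac{F(f(x))+F(-f(x))}{2}\,{\rm d}x$. The two match, so $\{H_n(f)\}_n\sim_\lambda\psi_{f|_{[0,\pi]}}$. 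For the last relation, $\{H_n(f)\}_n\sim_\lambda\psi_{|f|_{[0,\pi]}|}$, note that on $[0,\pi]$ the multiset $\{\psi_{f|_{[0,\pi]}}(x),\psi_{f|_{[0,\pi]}}(x+\pi)\}=\{f(x),-f(x)\}$ equals $\{|f(x)|,-|f(x)|\}$ pointwise, so $\psi_{f|_{[0,\pi]}}$ and $\psi_{|f|_{[0,\pi]}|}$ induce the same distributional functional (they differ only by swapping the two branch values on the set where $f<0$); hence both are eigenvalue symbols of $\{H_n(f)\}_n$ — alternatively one invokes Corollary~\ref{cb}, which gives eigenvalues of $H_n(f)$ as $\pm\lambda_i(T_n(f))$, combined with Step~3 and the singular-value result of Step~1.

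\medskip

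\noindent\textbf{Main obstacle.} None of the steps is deep; the only thing requiring care is bookkeeping with the change of variables $x\mapsto-x$ (in Steps 1, 3, 4) and $x\mapsto x+\pi$ (in Step 4), making sure the a.e.\ identities from Lemmas~\ref{feven} and \ref{mod(f)even} are applied on the correct half-intervals and that the normalization constants $\frac1{2\pi}$ versus $\frac1\pi$ are tracked correctly. The mildly subtle point is the equivalence of the two $\psi$-symbols in the last claim: one must argue that reshuffling the two "fibre" values $f(x),-f(x)$ into $|f(x)|,-|f(x)|$ does not change the right-hand side of \eqref{sd}, which is clear since $F$ is applied symmetrically to the pair — but it is worth stating explicitly rather than leaving implicit.
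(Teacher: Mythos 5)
Your proof is correct and follows essentially the same route as the paper: fold $[-\pi,\pi]$ onto $[0,\pi]$ using Lemmas~\ref{mod(f)even} and~\ref{feven}, use Remark~\ref{eig-singv} for the singular values of $H_n(f)$, and pass from the $2\times2$ symbol of Theorem~\ref{r-hankel} to the scalar symbol $\psi_{f|_{[0,\pi]}}$. The only cosmetic difference is that where you verify the equality of the two distribution functionals by direct change of variables, the paper simply invokes Lemma~\ref{lem:concat} with $[\alpha,\beta]=[0,2\pi]$; your computation is precisely that lemma in the case $r=2$.
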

\begin{proof}\hfill
\begin{enumerate}[nolistsep,leftmargin=*]
	\item By Lemma~\ref{mod(f)even}, we have $f(-x)=\overline{f(x)}$ for almost every $x\in[-\pi,\pi]$, hence $|f(-x)|=|f(x)|$ for almost every $x\in[-\pi,\pi]$. Thus, the relation $\{T_n(f)\}_n\sim_\sigma f|_{[0,\pi]}$ is a consequence of the relation $\{T_n(f)\}_n\sim_\sigma f$ (which holds by Theorem~\ref{Toep-th}) and the definition of asymptotic singular value distribution. The relation $\{H_n(f)\}_n\sim_\sigma f|_{[0,\pi]}$ follows immediately from $\{T_n(f)\}_n\sim_\sigma f|_{[0,\pi]}$ and the fact that the singular values of $H_n(f)$ and $T_n(f)$ coincide; see Remark~\ref{eig-singv}.
	\item By Lemma~\ref{feven}, $f$ is real and even a.e. Thus, the relation $\{T_n(f)\}_n\sim_\lambda f|_{[0,\pi]}$ is a consequence of the relation $\{T_n(f)\}_n\sim_\lambda f$ (which holds by Theorem~\ref{Toep-th}) and the definition of asymptotic spectral distribution. The relation $\{H_n(f)\}_n\sim_\lambda\psi_{f|_{[0,\pi]}}$ is a consequence of the relation
	\begin{equation}\label{ehi}
	\{H_n(f)\}_n\sim_\lambda{\rm diag}(f(x),-f(x)),\qquad x\in[0,\pi],
	\end{equation}
	(which holds by Theorem~\ref{r-hankel} and the evenness of $f$) and Lemma~\ref{lem:concat} applied with $[\alpha,\beta]=[0,2\pi]$.
	Finally, the relation $\{H_n(f)\}_n\sim_\lambda\psi_{|f|_{[0,\pi]}|}$ is a consequence of the relation $\{H_n(f)\}_n\sim_\lambda\psi_{f|_{[0,\pi]}}$ and the definition of asymptotic spectral distribution. \qedhere
\end{enumerate}
\end{proof}

\section{Numerical experiments}\label{num_exps}

We present in this section a few numerical examples that illustrate Theorems~\ref{thm3-Giov-vts} and~\ref{0notin}. Note that the thesis of Theorem~\ref{0notin} is stronger than the thesis of Theorem~\ref{thm3-Giov-vts}, because the existence of an a.u.\ grid $\{x_{i,n}\}_{i=1,\ldots,n}$ in $[0,\pi]$ such that \eqref{a} is satisfied implies that \eqref{af} is satisfied for every a.u.\ grid $\{x_{i,n}\}_{i=1,\ldots,n}$ in $[0,\pi]$. Thus, for functions $f$ satisfying the hypotheses of both Theorems~\ref{thm3-Giov-vts} and~\ref{0notin}, we just illustrate Theorem~\ref{0notin}.

\begin{example}\label{e2}

\begin{figure}
\centering
\begin{minipage}{0.485\textwidth}
\centering
\includegraphics[width=\textwidth]{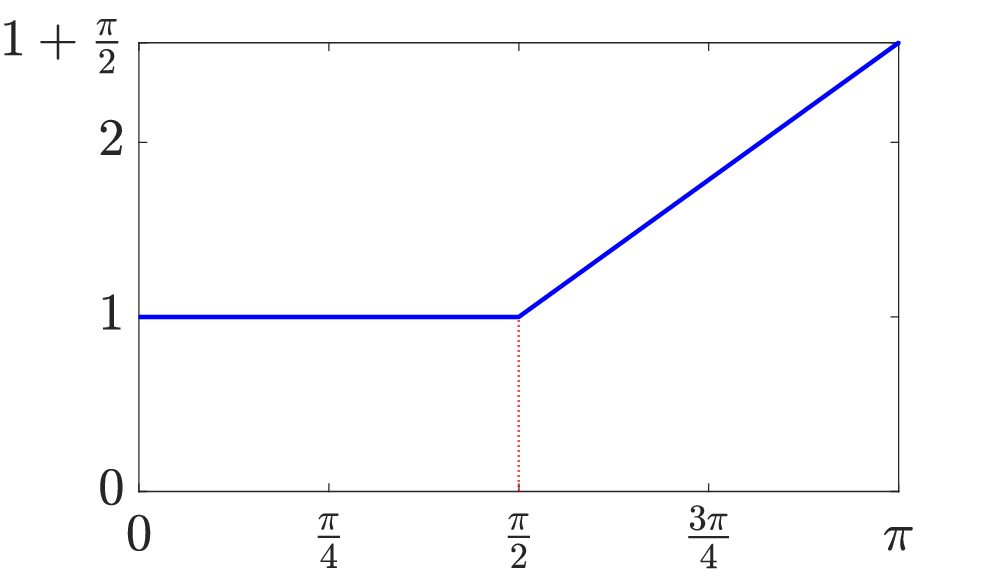}
\caption{Example~\ref{e2}: Graph on the interval $[0,\pi]$ of the function $f(\theta)$ defined in \eqref{fc}.}
\label{fc_graph}
\end{minipage}
\hspace{0.01\textwidth}
\begin{minipage}{0.485\textwidth}
\centering
\captionof{table}{Example~\ref{e2}: Computation of $M_n$ for increasing values of $n$.}
\begin{tabular}{rc}
\toprule
$n$ & $M_n$\\
\midrule
8   & 0.0851\\
16  & 0.0632\\
32  & 0.0454\\
64  & 0.0312\\
128 & 0.0206\\
256 & 0.0132\\
512 & 0.0082\\
1024 & 0.0050\\
\bottomrule
\end{tabular}
\label{e2t}
\end{minipage}
\end{figure}

Let $f:[-\pi,\pi]\to\mathbb R$,
\begin{equation}\label{fc}
f(\theta)=\left\{\begin{aligned}
&1, &\quad0&\le\theta<\pi/2,\\
&\theta+1-\pi/2, &\quad\pi/2&\le\theta\le\pi,\\
&f(-\theta), &\quad-\pi&\le\theta<0.
\end{aligned}\right.
\end{equation}
Figure~\ref{fc_graph} shows the graph of $f$ over the interval $[0,\pi]$. The function $f$ satisfies the hypotheses of Theorem~\ref{thm3-Giov-vts}.
Hence, by Theorem~\ref{thm3-Giov-vts}, for every $n$ and every a.u.\ grid $\{x_{i,n}\}_{i=1,\ldots,n}$ in $[0,\pi]$, there exist a real unitary matrix $V_n$ and an ordering of the eigenvalues of $T_n(f)$ and $H_n(f)$ such that
\begin{alignat}{3}
Y_n&=V_n\Delta_nV_n^*,&\qquad\Delta_n&=\mathop{\rm diag}_{i=1,\ldots,n}(-1)^{i+1},\label{a-e2}\\
T_n(f)&=V_nD_nV_n^*,&\qquad D_n&=\mathop{\rm diag}_{i=1,\ldots,n}\lambda_i(T_n(f)),\label{a--e2}\\
H_n(f)&=V_nE_nV_n^*,&\qquad E_n&=\Delta_nD_n=\mathop{\rm diag}_{i=1,\ldots,n}\lambda_i(H_n(f)),\label{a---e2}\\
\lambda_i(H_n(f))&=(-1)^{i+1}\lambda_i(T_n(f)), &\qquad i&=1,\ldots,n,\label{ee2}\\
M_n&=\max_{i=1,\ldots,n}|f(x_{i,n})-\lambda_i(T_n(f))|\to0\ \,\mbox{as}\,\ n\to\infty.\label{ae2}
\end{alignat}
To provide numerical evidence of this, for the values of $n$ considered in Table~\ref{e2t}, we arranged the eigenvalues of $T_n(f)$ so that \eqref{a-e2}--\eqref{ee2} are satisfied. In other words, we arranged the eigenvalues of $T_n(f)$ so that the eigenvector associated with the $i$th eigenvalue $\lambda_i(T_n(f))$ is either symmetric or skew-symmetric depending on whether $i$ is odd or even. Then, we computed $M_n$ in the case of the a.u.\ grid $x_{i,n}=\frac{i\pi}{n+1}$, $i=1,\ldots,n$.
We see from the table that $M_n\to0$ as $n\to\infty$, though the convergence is slow.

Now we observe that $f$ does not satisfy the hypotheses of Theorem~\ref{0notin}. Actually, $f$ satisfies all the hypotheses of Theorem~\ref{0notin} except the assumption that $f$ has a finite number of local maximum/minimum points. Indeed, $f$ is constant on $[0,\pi/2]$ and so all points in $[0,\pi/2)$ are both local maximum and local minimum points for $f$ according to our Definition~\ref{wlep}. We observe that, in fact, the thesis of Theorem~\ref{0notin} does not hold in this case, because there is no a.u.\ grid $\{x_{i,n}\}_{i=1,\ldots,n}$ in $[0,\pi]$ such that, for every $n$,
\[ \lambda_i(T_n(f))=f(x_{i,n}),\qquad i=1,\ldots,n, \]
for a suitable ordering of the eigenvalues of $T_n(f)$. Indeed, the eigenvalues of $T_n(f)$ are contained in $(1,1+\pi/2)$ by Theorem~\ref{Toep-th} and so any grid $\{x_{i,n}\}_{i=1,\ldots,n}\subset[0,\pi]$ satisfying the previous condition must be contained in $(\pi/2,\pi)$, which implies that it cannot be a.u.\ in $[0,\pi]$.
\end{example}

\begin{example}\label{e2.5}
This example is suggested by the cubic B-spline Galerkin discretization of second-order eigenvalue (and Poisson) problems \cite[Section~2.4.1]{Tom-paper}.
Let $f:[-\pi,\pi]\to\mathbb R$,
\begin{equation}\label{f-IgA}
f(\theta)=\frac23-\frac14\cos(\theta)-\frac25\cos(2\theta)-\frac1{60}\cos(3\theta).
\end{equation}
Figure~\ref{f-IgA_graph} shows the graph of $f$ over the interval $[0,\pi]$.
The function $f$ satisfies the hypotheses of Theorem~\ref{0notin}. Hence, by Theorem~\ref{0notin}, for every $n$ there exist an a.u.\ grid $\{x_{i,n}\}_{i=1,\ldots,n}$ in $[0,\pi]$, a real unitary matrix $V_n$, and an ordering of the eigenvalues of $T_n(f)$ and $H_n(f)$ such that
\begin{alignat}{3}
Y_n&=V_n\Delta_nV_n^*,&\qquad\Delta_n&=\mathop{\rm diag}_{i=1,\ldots,n}(-1)^{i+1},\label{a-e1-IgA}\\
T_n(f)&=V_nD_nV_n^*,&\qquad D_n&=\mathop{\rm diag}_{i=1,\ldots,n}\lambda_i(T_n(f)),\label{a--e1-IgA}\\
H_n(f)&=V_nE_nV_n^*,&\qquad E_n&=\Delta_nD_n=\mathop{\rm diag}_{i=1,\ldots,n}\lambda_i(H_n(f)),\label{a---e1-IgA}\\
\lambda_i(H_n(f))&=(-1)^{i+1}\lambda_i(T_n(f)), &\qquad i&=1,\ldots,n,\label{ee1-IgA}\\
\lambda_i(T_n(f))&=f(x_{i,n}), &\qquad i&=1,\ldots,n.\label{ae1-IgA}
\end{alignat}
To provide numerical evidence of this, for the values of $n$ considered in Table~\ref{e2.5t}, we arranged the eigenvalues of $T_n(f)$ so that \eqref{a-e1-IgA}--\eqref{ee1-IgA} are satisfied. In other words, we arranged the eigenvalues of $T_n(f)$ so that the eigenvector associated with the $i$th eigenvalue $\lambda_i(T_n(f))$ is either symmetric or skew-symmetric depending on whether $i$ is odd or even. Then, we computed a grid $\mathcal G_n=\{x_{i,n}\}_{i=1,\ldots,n}$ satisfying \eqref{ae1-IgA} and we reported in Table~\ref{e2.5t} its uniformity measure $m(\mathcal G_n)$. We see from the table that $m(\mathcal G_n)\to0$ as $n\to\infty$, meaning that $\mathcal G_n$ is a.u.\ in $[0,\pi]$, though the convergence to $0$ of $m(\mathcal G_n)$ is slow.

\begin{figure}
\centering
\begin{minipage}{0.485\textwidth}
\centering
\includegraphics[width=\textwidth]{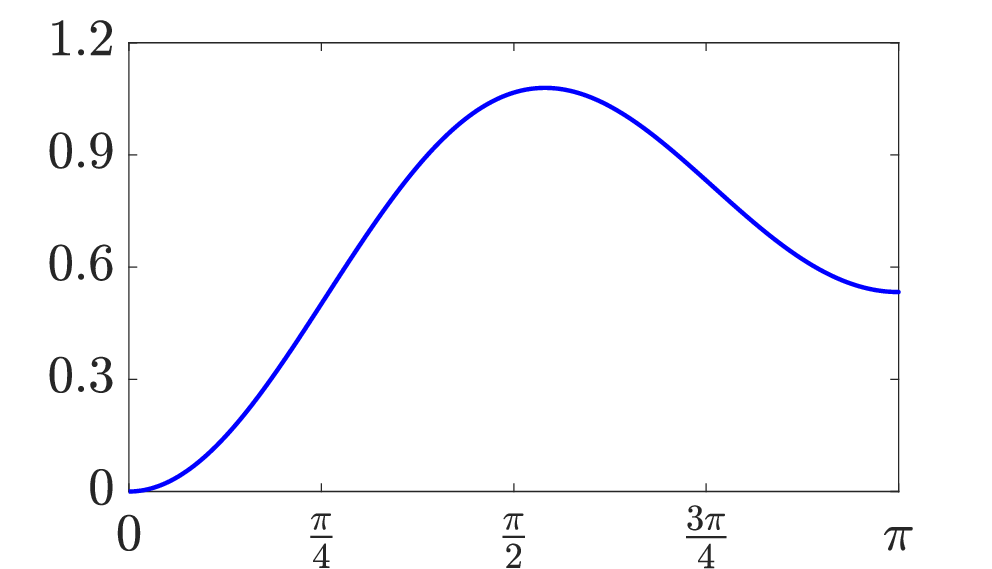}
\caption{Example~\ref{e2.5}: Graph on the interval $[0,\pi]$ of the function $f(\theta)$ defined in \eqref{f-IgA}.}
\label{f-IgA_graph}
\end{minipage}
\hspace{0.01\textwidth}
\begin{minipage}{0.485\textwidth}
\centering
\captionof{table}{Example~\ref{e2.5}: Computation of $m(\mathcal G_n)$ for increasing values of $n$.}
\begin{tabular}{rcc}
\toprule
$n$ & $m(\mathcal G_n)$\\
\midrule
8   & 0.2475\\
16  & 0.1304\\
32  & 0.0753\\
64  & 0.0403\\
128 & 0.0328\\
256 & 0.0245\\
512 & 0.0119\\
1024 & 0.0060\\
\bottomrule
\end{tabular}
\label{e2.5t}
\end{minipage}
\end{figure}

\end{example}

\begin{example}\label{e3}
In this last example, we consider a discontinuous function.
Let $f:[-\pi,\pi]\to\mathbb R$,
\begin{equation}\label{fd}
f(\theta)=\left\{\begin{aligned}
&\cos(2\theta)+\cos(3\theta), &\quad0&\le\theta<\pi/2,\\
&\theta, &\quad\pi/2&\le\theta\le\pi,\\
&f(-\theta), &\quad-\pi&\le\theta<0.
\end{aligned}\right.
\end{equation}
Figure~\ref{fd_graph} shows the graph of $f$ over the interval $[0,\pi]$.
The function $f$ satisfies the hypotheses of Theorem~\ref{0notin} just as the function $f$ of Example~\ref{e2.5}. Hence, we proceeded exactly as in Example~\ref{e2.5}. The results are collected in Table~\ref{e3t}, which is the version of Table~\ref{e2.5t} for this example.

\begin{figure}
\centering
\begin{minipage}{0.485\textwidth}
\centering
\includegraphics[width=\textwidth]{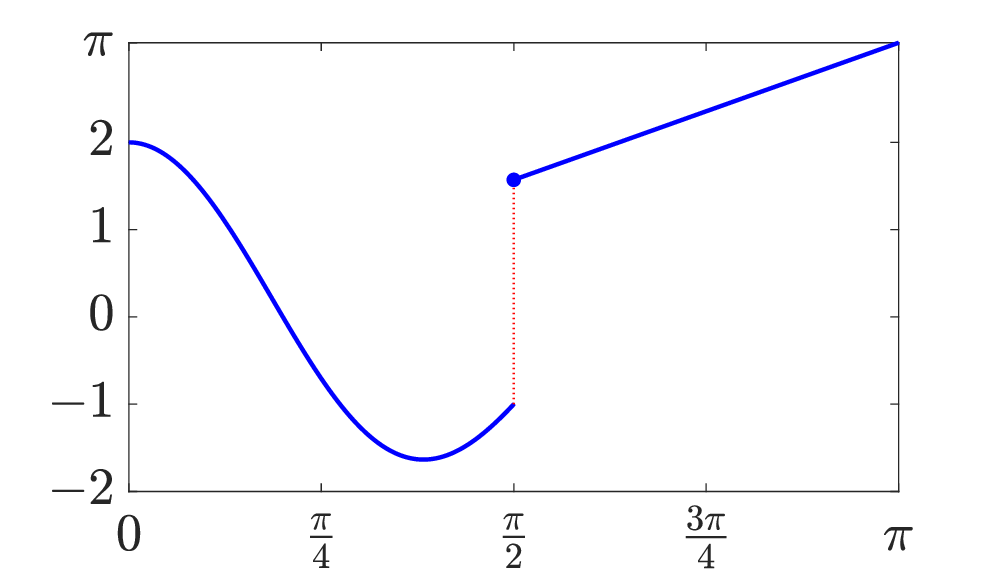}
\caption{Example~\ref{e3}: Graph on the interval $[0,\pi]$ of the function $f(\theta)$ defined in \eqref{fd}.}
\label{fd_graph}
\end{minipage}
\hspace{0.01\textwidth}
\begin{minipage}{0.485\textwidth}
\centering
\captionof{table}{Example~\ref{e3}: Computation of $m(\mathcal G_n)$ for increasing values of $n$.}
\begin{tabular}{rcc}
\toprule
$n$ & $m(\mathcal G_n)$\\
\midrule
8   & 0.5771\\
16  & 0.4633\\
32  & 0.2421\\
64  & 0.2082\\
128 & 0.1127\\
256 & 0.0812\\
512 & 0.0336\\
1024 & 0.0183\\
\bottomrule
\end{tabular}
\label{e3t}
\end{minipage}
\end{figure}

\end{example}

\section{Conclusions}\label{conc}
We have studied the spectral properties of flipped Toeplitz matrices of the form $H_n(f)=Y_nT_n(f)$, where $T_n(f)$ is the $n\times n$ Toeplitz matrix generated by $f$ and $Y_n$ is the exchange (flip) matrix in \eqref{flipM}. Our spectral results are collected in Theorems~\ref{cb+-}--\ref{psi-thm}.
The spectral properties obtained in this paper can be used in the convergence analysis of MINRES for the solution of real non-symmetric Toeplitz linear systems of the form $T_n(f)\mathbf x=\mathbf b$ after pre-multiplication of both sides by $Y_n$, as suggested by Pestana and Wathen \cite{PW}.

\section*{Acknowledgements}
Giovanni Barbarino, Carlo Garoni and Stefano Serra-Capizzano are members of the Research Group GNCS (Gruppo Nazionale per il Calcolo Scientifico) of INdAM (Istituto Nazionale di Alta Matematica).
Giovanni Barbarino acknowledges the support by the European Union (ERC consolidator grant, eLinoR, No 101085607).
Carlo Garoni was supported by the Department of Mathematics of the University of Rome Tor Vergata through the MUR Excellence Department Project MatMod@TOV (CUP E83C23000330006) and the Project RICH\hspace{1pt}\rule{5pt}{0.4pt}GLT (CUP E83C22001650005).
David Meadon was funded by the Centre for Interdisciplinary Mathematics (CIM) at Uppsala University.
Stefano Serra-Capizzano was funded by the European High-Performance Computing Joint Undertaking (JU) under grant agreement No 955701. The JU receives support from the European Union's Horizon 2020 research and innovation programme and Belgium, France, Germany, Switzerland.
Stefano Serra-Capizzano is also grateful to the Theory, Economics and Systems Laboratory (TESLAB) of the Department of Computer Science at the Athens University of Economics and Business for providing financial support.

\end{document}